\documentclass[reqno]{amsart}
\usepackage{amsmath,amsthm,amssymb}
\usepackage{latexsym}
\usepackage{eucal}
\usepackage{tikz}

\usepackage{mathtools}
\usepackage[english]{babel}
\usepackage[autostyle, english = american]{csquotes}




\newtheorem{theorem}{Theorem}[section]
\newtheorem{lemma}{Lemma}[section]

\usepackage{amsmath,amsthm,amssymb}
\usepackage{latexsym}
\usepackage{eucal}
\usepackage{amsmath,amsthm,amssymb}
\usepackage{amsmath}
\usepackage{amsfonts}
\usepackage{amssymb}
\usepackage{mathtools}
\usepackage{amsthm}
\usepackage{latexsym}
\usepackage{eucal}
\usepackage{fullpage}
\usepackage{pgf}

\def\cal{\mathcal}
\let\Re=\undefined
\DeclareMathOperator{\Re}{Re}
\let\Im=\undefined
\DeclareMathOperator{\Im}{Im}

\def\ge{\geqslant}\def\le{\leqslant}

\def\~{\widetilde}



\def\cal{\mathcal}
\let\Re=\undefined
\DeclareMathOperator{\Re}{Re}
\let\Im=\undefined
\DeclareMathOperator{\Im}{Im}

\def\ge{\geqslant}
\def\le{\leqslant}
\def\~{\widetilde}

\begin{document}
\title[On Schur parameters in Steklov's problem
 ]{On Schur parameters in Steklov's problem}
\author{ S. Denisov, K. Rush }

\address{
\begin{flushleft}
Sergey Denisov: denissov@wisc.edu\\\vspace{0.1cm}
University of Wisconsin--Madison\\  Mathematics Department\\
480 Lincoln Dr., Madison, WI, 53706,
USA\vspace{0.1cm}\\and\\\vspace{0.1cm}
Keldysh Institute for Applied Mathematics, Russian Academy of Sciences\\
Miusskaya pl. 4, 125047 Moscow, RUSSIA\\
\vspace{1cm} Keith Rush: jkrush@wisc.edu
\\ \vspace{0.1cm}
University of Wisconsin--Madison\\  Mathematics Department\\
480 Lincoln Dr., Madison, WI, 53706, USA
\end{flushleft}
}\maketitle

\begin{abstract}
We study the recursion (aka Schur) parameters for monic polynomials
orthogonal on the unit circle with respect to a weight which
provides negative answer to the conjecture of Steklov.
\end{abstract}
\section{Introduction}

Given a probability measure $d\sigma$ on the unit circle $\mathbb{T}
=\partial \mathbb{D}, \mathbb{D}=\{z \in \mathbb{C}: |z| < 1\}$ with
infinitely many growth points, we define the monic polynomials
$\{\Phi_n\}$ orthogonal in $L^2(d\sigma)$ by beginning with the set
$\{1, z, z^2, \dots\}$ and applying the Gram-Schmidt
orthogonalization procedure, i.e.,
\[
\Phi_n(z,\sigma)=z^n+a_{n-1,n}z^{n-1}+\ldots+a_{0,n}, \quad
\int_{\mathbb{T}}
\Phi_n(e^{i\theta},\sigma)e^{-ij\theta}d\sigma(\theta)=0, \quad
j=0,\ldots,n-1.
\]
Consider the so-called $\ast$-operation (of order $n$) defined as
$$P_n^*(z) = z^n\overline{P_n(\bar{z}^{-1})}, \quad z\in \mathbb{C},$$ which gives
\[
P_n^*(z) = z^n\overline{P_n(z)}, \quad z\in \mathbb{T}
\]
for any polynomial $P_n$ of degree at most $n$. Given $\{\Phi_n\}$,
one can define the orthonormal polynomials by the formula
\begin{equation}\label{c1}
\phi_n(z,\sigma)=\frac{\Phi_n(z,\sigma)}{\|\Phi_n\|_{2,\sigma}},
\quad
\|\Phi_n\|_{2,\sigma}=\left(\int_{\mathbb{T}}|\Phi_n|^2d\sigma\right)^{1/2}.
\end{equation}
The polynomials of the second kind will be denoted by $\{\Psi_n\}$
(monic) and $\Psi_n^*(z) = z^n\overline{\Psi_n(\bar{z}^{-1})}, \,\,
z\in \mathbb{C}$, $\{\psi_n\}$ (orthonormal), and $\{\psi_n^*\}$.
The pair $(\Phi_n,\Phi_n^*)$ satisfies the Szeg\H{o} recurrence:
\begin{equation}\label{recur}
\left\{\begin{array}{cc} \Phi_{n+1}(z,\sigma) =&
z\Phi_n(z,\sigma)-\overline{\gamma}_n\Phi_n^*(z,\sigma)\\
\Phi_{n+1}^*(z,\sigma) =& \Phi_{n}^*(z,\sigma) - {\gamma}_n
z\Phi_n(z,\sigma)
\end{array}\right.
\end{equation}
The pair $(\Psi_j,\Psi_j^*)$ satisfies the same recurrence except
that the parameters are $\{-\gamma_n\}$.
 We
refer the reader to \cite{6,sim1} for the basic theory (our
$\gamma_j=\alpha_j$ from \cite{sim1}). The recursion parameters
$\{\gamma_n\} \subseteq \mathbb{D}^\infty$ are sometimes called the
Schur parameters due to their relationship with Schur functions and
the Schur algorithm. Two of the key identities in the theory are
\[
\|\Phi_n\|_{2,\sigma}=\left(\prod\limits_{j=0}^{n-1}
(1-|\gamma_j|^2)\right)^{1/2},
\]
\begin{equation}\label{sum-rule}
\exp\left(\frac1{4\pi}\int^\pi_{-\pi}\ln
(2\pi\sigma'(\theta))d\theta\right)=\prod_{j\,\ge0}\rho_j, \quad
\rho_j=(1-|\gamma_j|^2)^{1/2}\,\,.
\end{equation}
and the measure $\sigma$ is said to satisfy the Szeg\H{o} condition
if the left hand side in the last formula is positive (which is
equivalent to $\{\gamma_j\}\in \ell^2(\mathbb{Z}^+)$). Due to
\eqref{c1}, we have
\begin{equation}\label{nm}
|\phi_n(z)|\sim |\Phi_n(z)|
\end{equation}
uniformly in $n$ and $z$ provided that $\sigma$ is Szeg\H{o}
measure.

The condition on the measure to be probability controls only the
size of the orthonormal polynomials. Indeed,
\begin{equation}\label{ra1}
\Phi_j(z,\alpha \sigma)=\Phi_j(z,\sigma), \quad \phi_j(z,\alpha
\sigma)=\alpha^{-1/2} \phi_j(z,\sigma).
\end{equation}
In 1921, Steklov conjectured in \cite{stek} that, for a positive
weight $p$ given on the interval $[a,b]$ of the real line
$\mathbb{R}$, the corresponding sequence of orthonormal polynomials
$\{P_n(x,p)\}$ is bounded in $n$ for every fixed $x\in (a,b)$. This
conjecture attracted considerable interest (see, e.g.,
\cite{5}-\cite{Gol}) and was answered negatively by Rakhmanov in the
series of papers \cite{3,4}. The proofs first dealt with polynomials
orthogonal on the unit circle and then the obtained results were
recast to handle $\{P_n\}$ by making use of a formula due to
Geronimus. In this paper, we will only be focusing on the case of
orthogonality on the unit circle. We define the Steklov class of
measures as
\[
S_{\delta}=\left\{\sigma: \int d\sigma=1,\quad \sigma'\ge
\delta/(2\pi), \quad{\rm a.e.\quad} \theta\in [0,2\pi)\right\},
\]
where $\delta\in (0,1)$. The following variational problem was
considered in \cite{3,4}
\[
M_{n,\delta}=\sup_{\sigma\in
S_\delta}\|\phi_n(z,\sigma)\|_{L^\infty(\mathbb{T})}.
\]
In \cite{4}, the  estimates
\[
\left(\frac{n}{\ln ^3 n}\right)^{1/2}<_\delta  M_{n,\delta}<_\delta
n^{1/2}
\]
were established (see formula \eqref{aps} below for the definition
of $<_{\delta}$) and \cite{adt} improved it to
\[
M_{n,\delta}\sim_\delta n^{1/2}.
\]
The paper \cite{adt} contained a method that presents both the
measure ${\sigma^\ast}^{(n)}\in S_\delta$ and the polynomial
$\phi_n(z,{\sigma^\ast}^{(n)})$, which satisfies $
\|\phi_n(z,{\sigma^\ast}^{(n)})\|_\infty\sim_\delta n^{1/2} $,
explicitly. However, the parameters $\{\gamma_j^{(n)}\}$ that
correspond to this construction were defined only implicitly.
Neither did methods of Rakhmanov provide much information about the
behavior of $\{\gamma_j^{(n)}\}$. In the current paper, we study the
recursion parameters that give a negative answer to the conjecture
of Steklov. That opens new, difference-equation perspective to the
problem  and answers partially an open question raised in
\cite{adt}. The main result of the current paper is

\begin{theorem}\label{th1}
Fix $\epsilon\in (0,\epsilon_0]$ where $\epsilon_0$ is sufficiently
small. Then, there is $n_0(\epsilon)$ such that for every
$n>n_0(\epsilon)$, there is a weight ${\tilde{w}}^{(n)}$ such that
\begin{equation}\label{raz}
{\tilde{w}}^{(n)} {\it \,satisfies\,the\,uniform\,Steklov\, condition:\quad}
\left\|\frac{1}{{\tilde{w}}^{(n)}}\right\|_{L^\infty(\mathbb{T})}\lesssim
1,
\end{equation}
\begin{equation}\label{dva}
\|\phi_{2n+1}(z,\tilde{w}^{(n)})\|_{L^\infty(\mathbb{T})}>_\epsilon
\ln n,
\end{equation}
and the asymptotics for $\gamma_j^{(n)}$ is given by
\begin{equation}\label{qq1}
   \gamma_j^{(n)} = -\left\{
     \begin{array}{l}
       \displaystyle \frac{i^{j+1}}{j+1}
\sum_{\sigma=\pm 1} \sigma^{j+1} (2(j+1))^{\frac{ 2i\epsilon\sigma
}{\pi}} \frac{\Gamma(1-\frac{i\epsilon\sigma}{\pi})}
       {\Gamma\left(\frac{i\epsilon\sigma}{\pi}
       \right)} +r_{j,\epsilon},\quad 0\leq j \leq n-1\\
       -\displaystyle \frac{i^{j'+1}}{j'+1}

\sum_{\sigma=\pm 1} \sigma^{j'+1} (2(j'+1))^{\frac{ 2i\epsilon\sigma
}{\pi}} \frac{\Gamma(1-\frac{i\epsilon\sigma}{\pi})}
       {\Gamma\left(\frac{i\epsilon\sigma}{\pi}
       \right)}
 +
        r_{j',\epsilon},\quad
         n \leq j \leq 2n-1\\
       0, \hspace{3cm} j = 2n \\
       \displaystyle \frac{(-1)^{j}i^{j-2n}}{ j-2n      }

\sum_{\sigma=\pm 1} \sigma^{j-2n} (2(j-2n))^{\frac{ 2i\epsilon\sigma
}{\pi}} \frac{\Gamma(1-\frac{i\epsilon\sigma}{\pi})}
       {\Gamma\left(\frac{i\epsilon\sigma}{\pi}
       \right)}+r_{j-2n,\epsilon},\quad
        2n+1\leq j \leq 3n\\
       0, \hspace{3cm} j > 3n
 \end{array}
   \right.
\end{equation}
where $j'=2n-1-j$ and $|r_{j,\epsilon}|<C_\epsilon (j+1)^{-2}$.
\end{theorem}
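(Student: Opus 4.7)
The plan is to adopt formula \eqref{qq1} as the \emph{definition} of the Verblunsky coefficients $\gamma_j^{(n)}$, extended by zero for $j>3n$. Since for $\epsilon$ small the leading term is of size $O(\epsilon/j)$, the coefficients lie in $\mathbb{D}$ automatically, and the residuals $r_{j,\epsilon}$ may be taken to be zero (or chosen to absorb any small correction needed to stay in $\mathbb{D}$ for small $j$). By the Verblunsky correspondence, these parameters determine a unique probability measure on $\mathbb{T}$; because only finitely many $\gamma_j$ are nonzero, the measure is absolutely continuous of Bernstein--Szeg\H{o} type with density
\[
\tilde{w}^{(n)}(\theta)=\frac{\prod_{j=0}^{3n}(1-|\gamma_j^{(n)}|^2)}{|\Phi_{3n+1}(e^{i\theta})|^2},
\]
where $\Phi_{3n+1}$ is produced by the recurrence \eqref{recur}. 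Then \eqref{qq1} holds by fiat, and the problem reduces to \eqref{raz}, i.e.\ $|\Phi_{3n+1}(e^{i\theta})|\lesssim 1$ uniformly in $\theta$ and $n$, and \eqref{dva}, i.e.\ $\|\phi_{2n+1}\|_{L^\infty(\mathbb{T})}\gtrsim_\epsilon \log n$.

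The heart of the proof is the asymptotic analysis of the Szeg\H{o} cocycle driven by Schur parameters of size $|\gamma_j|\sim c/j$ modulated by the unimodular factor $i^j(2j)^{2i\epsilon\sigma/\pi}$. The key recognition is that
\[
A(k)=\frac{i^k}{k}\sum_{\sigma=\pm1}\sigma^k(2k)^{2i\epsilon\sigma/\pi}\frac{\Gamma(1-i\epsilon\sigma/\pi)}{\Gamma(i\epsilon\sigma/\pi)}
\]
is precisely the asymptotic $k$-th Fourier coefficient of a bounded \emph{oscillatory} symbol of Fisher--Hartwig type with two purely imaginary singularities located at $\theta=\pm\pi/2$ (reflecting the $i^k$ phase). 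Because the Fisher--Hartwig exponents are imaginary, the associated symbol $|f(\theta)|$ is bounded both above and below, which is exactly what supports \eqref{raz} for the untruncated model. I would introduce Pr\"ufer-type variables $\Phi_j(e^{i\theta})=R_j(\theta)\exp(i(j\theta/2+\beta_j(\theta)))$, insert the ansatz into \eqref{recur}, and apply summation by parts against the factor $i^j$; after cancellation the drift of $(\log R_j,\beta_j)$ becomes $O(1/j^2)$ away from the resonant angles, while near them the Fisher--Hartwig machinery yields only bounded oscillation. The three-block form of \eqref{qq1} is then engineered so that the truncation at $j=3n$ does not spoil this bound: the reflected, sign-flipped block on $[n,2n-1]$ undoes the drift accumulated on $[0,n-1]$, the vanishing $\gamma_{2n}=0$ performs a phase adjustment, and the terminal block on $[2n+1,3n]$ is tuned so that $\Phi_{3n+1}$ reproduces the magnitude of the infinite Fisher--Hartwig $\Phi_\infty$ uniformly in $\theta$. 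The lower bound \eqref{dva} then follows by evaluating the Szeg\H{o} recurrence at a resonant point where $i^jz^j$ is constant: increments $\overline{\gamma_j}\Phi_j^*/\|\Phi_j\|_2$ from the middle block add coherently to produce a partial sum $\sum_{k=1}^n k^{-1}\sim\log n$ in $\phi_{2n+1}$.

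The hard part will be the uniform bound in \eqref{raz} near $\theta=\pm\pi/2$. Since $\sum_{j\le n}|\gamma_j|^2\sim\epsilon^2\log n$ is borderline for the Szeg\H{o} class, any naive Gronwall-type estimate loses a logarithm; one needs the \emph{exact} asymptotic cancellation between the drifts of the first and middle blocks, and this is what forces the precise gamma-function prefactor in $A(k)$. The summable tail $|r_{j,\epsilon}|\lesssim j^{-2}$ contributes only $O(1)$ in absolute value and is harmless for both \eqref{raz} and \eqref{dva}. Technically, one must compare the Szeg\H{o} cocycle with Schur parameters truncated at $3n$ to the infinite Fisher--Hartwig cocycle; the latter can be analyzed by the classical Deift--Its--Krasovsky / Ehrhardt-type asymptotics for norms at Fisher--Hartwig singularities with purely imaginary exponents, and the gamma-function factors in \eqref{qq1} are exactly the coefficients that match this asymptotic.
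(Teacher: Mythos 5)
Your proposal correctly recognizes the Fisher--Hartwig origin of the coefficients and the role of the three-block structure, but as written it has a genuine gap: the two substantive claims, \eqref{raz} and \eqref{dva}, are only sketched by a Pr\"ufer-variable/summation-by-parts program that is never carried out, and --- as you yourself observe --- the naive version of that program loses a logarithm precisely because $\sum_{j\le n}|\gamma_j|^2\sim\epsilon^2\ln n$ is borderline. The paper does not fight this battle in the difference equation at all. Instead it works with the \emph{actual} Schur parameters $\{\alpha_j\}$ of the jump weight $f$ in \eqref{ves} (so \eqref{qq1} is a consequence of the Riemann--Hilbert asymptotics of Lemma \ref{l4}, not a definition), and then exploits two exact algebraic mechanisms you do not identify. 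First, the sign-flipped reversed middle block is exactly the hypothesis of Lemma \ref{l2}, which yields the closed formula $2\Phi_{2n+1}=z\Phi_n^2+z\Phi_n\Psi_n-(\Phi_n^*)^2+\Phi_n^*\Psi_n^*$; the bound \eqref{dva} then follows from $\|\Phi_n^*\Psi_n^*+z\Phi_n\Psi_n\|_\infty>_\epsilon\ln n$, which in turn comes from $|\Psi_n^*|\sim\epsilon\ln n$ near the jump (a Riemann--Hilbert fact about the second-kind polynomials, not a coherent-sum estimate you can get from the leading term of \eqref{qq1} alone). Second, the terminal block $2n+1\le j\le 3n$ is not ``tuned'' in a soft sense: by Lemma \ref{rot} it consists precisely of the Schur parameters of the rotated Bernstein--Szeg\H{o} weight $(2\pi)^{-1}|\phi_n^*(e^{i(\theta+\pi)})|^{-2}$, so the decoupling Lemma \ref{decop} applies with $\widetilde F(z)=\Psi_n^*(-z)/\Phi_n^*(-z)$, and \eqref{raz} reduces to the identity $D^*+\tfrac{D}{A}A^*=\tfrac{z^{2n}}{\overline A}\bigl(\tfrac{D}{A}+\overline{(\tfrac{D}{A})}\bigr)=O(1)$, which rests on the Wronskian relation \eqref{quatro} and the reflection estimate \eqref{trez}.

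Two further specific problems. Setting $r_{j,\epsilon}=0$ changes the measure: the resulting parameters are no longer those of $f$, so you cannot then invoke Deift--Its--Krasovsky asymptotics for the associated polynomials without a separate perturbation argument (the discarded tail is $\ell^1$, so Baxter gives qualitative control, but the uniform-in-$n$ comparability of $\Phi_n^*,\Psi_n^*$ needed for \eqref{raz} and \eqref{dva} does not follow for free). And your claimed mechanism for \eqref{dva} --- coherent increments from the middle block summing to $\sum_{k\le n}k^{-1}$ --- attributes the logarithm to the wrong object: in the paper the middle block contributes only through the doubling identity, and the $\ln n$ lives in $\Psi_n^*$ of the \emph{original} weight evaluated at distance $\sim n^{-1/2}$ from the jump. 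To turn your outline into a proof you would need to supply, at minimum, substitutes for Lemmas \ref{l2}, \ref{rot}, \ref{decop} and the three estimates of Lemma \ref{l1}.
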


{\bf Remark.} The existence of the measure satisfying these
conditions is not a new result \cite{murman} and the logarithmic
growth is not optimal \cite{dn}. However, the construction and the
asymptotics of the Schur parameters are new and might be
interesting. The polynomial constructed by our method has a
structure similar to the one from \cite{3,d3}.

{\bf Remark.} The careful analysis of \eqref{qq1} shows that the
main terms are real-valued and converge to $0$ as $\epsilon\to 0$
for every fixed $j$. The results in \cite{deift} allow one to
control the dependence of $C_\epsilon$ on $\epsilon$ when it
converges to zero and we conjecture that $\lim_{\epsilon\to
0}{C_\epsilon}=0$.
\smallskip

We will use the following notation. If $f_{1(2)}(x)$ are two
positive functions for which $ f_1<Cf_2 $ with some absolute
constant $C$, uniformly in the argument, we will write $f_1\lesssim
f_2$. If $f_1\lesssim f_2$ and $f_2\lesssim f_1$, then $f_1\sim
f_2$. If
\begin{equation}\label{aps}
\sup_{x}\frac{f_1}{f_2}<C(\epsilon),
\end{equation}
where $\epsilon$ is a parameter, then we will write
$f_1<_{\epsilon}f_2$. Relations $f_1\sim _\epsilon f_2,
f_1>_\epsilon f_2$ are defined similarly.

 The
symbol $\Gamma$ denotes the Gamma function. We will call a function
$F$ to be Carath\'eodory if it is analytic in $\mathbb{D}$ and its
real part is positive. Given a measure $\sigma$, we will denote the
Carath\'eodory function given by the Schwarz transform of $\sigma$
as
\[
F(z)=\mathcal{S}(\sigma)=\int_{-\pi}^\pi
\frac{e^{i\theta}+z}{e^{i\theta}-z}d\sigma(\theta).
\]
If $\sigma$ is in Szeg\H{o} class, $\Pi$ will denote the outer
function in $\mathbb{D}$ that satisfies
\[
|\Pi(z)|^{-2}=2\pi\sigma', \quad {\rm a.e.}\, z\in \mathbb{T}, \quad
\Pi(0)>0\,.
\]

\smallskip

The structure of the paper is as follows: the second section
contains the proofs of auxiliary Lemmas and the main Theorem. The
third section is an Appendix with the proof of a Lemma in the main
text.

\section{Proof of Theorem \ref{th1}}

The measures considered below will be symmetric with respect to
$\mathbb{R}$ and the related Schur parameters will be real. We will
need the following simple Lemma.
\begin{lemma}\label{l2}
Suppose $\Phi_k, \Psi_k, \Phi_k^*, \Psi_k^*$ are the polynomials
that correspond to {\bf real} Schur parameters $
\{\alpha_j\}_{j=0}^{k-1}$. Then, the polynomials associated to the
sequence of Schur parameters $\{\gamma_j\}$ given by
$$\gamma_j = \left\{
     \begin{array}{lr}
       \alpha_j & : 0 \leq j\leq k-1\\
       -\alpha_{2k-1-j} & : k \leq j \leq 2k-1
     \end{array}
   \right.$$
satisfy
$$2\Phi_{2k} = \Phi_k^2+\Phi_k\Psi_k-z^{-1}(\Phi_k^*)^2+z^{-1}\Phi_k^*\Psi_k^*,$$
$$2\Phi_{2k}^* = (\Phi_k^*)^2+\Phi_k^*\Psi_k^*-z\Phi_k^2+z\Phi_k\Psi_k.$$

\end{lemma}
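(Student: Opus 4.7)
The plan is to reduce the claim to a short transfer-matrix identity. Encode the Szeg\H{o} recursion \eqref{recur} as $\begin{pmatrix}\Phi_{n+1}\\\Phi_{n+1}^*\end{pmatrix} = M(\gamma_n, z)\begin{pmatrix}\Phi_n\\\Phi_n^*\end{pmatrix}$ with $M(\gamma,z)=\begin{pmatrix}z & -\gamma\\ -\gamma z & 1\end{pmatrix}$. Then
\[
\begin{pmatrix}\Phi_{2k}\\\Phi_{2k}^*\end{pmatrix} = B\begin{pmatrix}\Phi_k\\\Phi_k^*\end{pmatrix}, \qquad B = M(-\alpha_0)M(-\alpha_1)\cdots M(-\alpha_{k-1}),
\]
is the transfer matrix corresponding to the palindromic second half of the Schur parameters. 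Because $M(-\gamma,z)=T M(\gamma,z)T$ with $T=\operatorname{diag}(1,-1)$ and $T^2=I$, this telescopes to $B = T\tilde A T$, where $\tilde A := M(\alpha_0)\cdots M(\alpha_{k-1})$ is built from the same factors as the first-half transfer matrix $A := M(\alpha_{k-1})\cdots M(\alpha_0)$, but multiplied in the opposite order.

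The crucial step is the conjugation identity
\[
M(\gamma,z)\,D = D\,M(\gamma,z)^T,\qquad D = \operatorname{diag}(1,z),
\]
which is a direct $2\times 2$ verification. Propagating it across the $k$ factors telescopes to $A\,D = D\,\tilde A^T$, so $\tilde A = D A^T D^{-1}$. This is the only non-obvious ingredient; without it one is stuck trying to compare $\tilde A$ and $A$, which differ because the transfer matrices do not commute.

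The remainder is bookkeeping. Since $A(1,1)^T = (\Phi_k,\Phi_k^*)^T$ by definition and $A(1,-1)^T = (\Psi_k,-\Psi_k^*)^T$ (conjugation by $T$ flips every $\gamma_j$ and so produces the polynomials of the second kind), solving a $2\times 2$ linear system for the entries of $A$ gives
\[
A = \tfrac12\begin{pmatrix}\Phi_k+\Psi_k & \Phi_k-\Psi_k\\ \Phi_k^*-\Psi_k^* & \Phi_k^*+\Psi_k^*\end{pmatrix}.
\]
Substituting into $B = T D A^T D^{-1} T$ and applying to $(\Phi_k,\Phi_k^*)^T$ yields the two claimed formulas after routine algebra; the $D^{\pm 1}$ conjugation is precisely what introduces the $z^{\pm 1}$ factors in the statement.
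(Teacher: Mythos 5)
Your proof is correct and follows essentially the same route as the paper's: the same transfer-matrix encoding, the same two conjugation identities (by $\operatorname{diag}(1,z)$ to convert order reversal into transposition, and by $\operatorname{diag}(1,-1)$ to flip the signs of the parameters), the same identification of the entries of $A$ via the first- and second-kind polynomials, and the same final multiplication. No gaps.
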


\begin{proof}
It is known that the pair $(\Psi_j,\Psi_j^*)$ satisfies the
Szeg\H{o} recurrence with  parameters $\{-\alpha_j\}$. If
\begin{equation}\label{aa1}
 A = \prod_{j=k-1}^{0}\begin{bmatrix}
   z       & -\gamma_j \\
    -z\gamma_j       & 1 \\
\end{bmatrix}
=
\begin{bmatrix}
               a      & b \\
                c      & d \\
            \end{bmatrix},
 \end{equation}
 then
 \[
\begin{bmatrix}
               \Phi_k     & \Psi_k \\
                \Phi_k^*      & -\Psi_k^* \\
            \end{bmatrix}=A\begin{bmatrix}
               1      & 1 \\
                1      & -1 \\
            \end{bmatrix}.
 \]
Thus,
$$a = \frac{\Phi_{k}+\Psi_{k}}{2},\,
b = \frac{\Phi_{k}-\Psi_{k}}{2},\,
c=\frac{\Phi_{k}^*-\Psi_{k}^*}{2},\,
d=\frac{\Phi_{k}^*+\Psi_{k}^*}{2}.$$ First we reverse the dynamics.
We are interested in
$$ \prod_{j=0}^{k-1}\begin{bmatrix}
   z       & -\gamma_j \\
    -z\gamma_j       & 1 \\

\end{bmatrix}\,.
$$
We see that
$$A^T = \prod_{j=0}^{k-1}
\begin{bmatrix}
   z      & -z\gamma_j \\
    -\gamma_j       & 1 \\
\end{bmatrix}
 = \prod_{j=0}^{k-1} \left(
 \begin{bmatrix}
    1      & 0 \\
     0      & z^{-1} \\
 \end{bmatrix}
 \begin{bmatrix}
    z      & -\gamma_j \\
     -z\gamma_j       & 1 \\
 \end{bmatrix}
 \begin{bmatrix}
     1      & 0 \\
      0      & z \\
  \end{bmatrix}
  \right)
  =
  \begin{bmatrix}
      1      & 0 \\
     0      & z^{-1} \\
   \end{bmatrix}
   \prod_{j=0}^{k-1}
   \left(  \begin{bmatrix}
       z      & -\gamma_j \\
        -z\gamma_j       & 1 \\
    \end{bmatrix} \right)
    \begin{bmatrix}
         1      & 0 \\
          0      & z \\
      \end{bmatrix}\,.
  $$
  Therefore,
  $$ \prod_{j=0}^{k-1}\begin{bmatrix}
     z       & -\gamma_j \\
      -z\gamma_j       & 1 \\

  \end{bmatrix}
   =
  \begin{bmatrix}
        1      & 0 \\
       0      & z \\
     \end{bmatrix}
     A^T
     \begin{bmatrix}
           1      & 0 \\
          0      & z^{-1} \\
        \end{bmatrix}\,.
        $$
We have
$$
\begin{bmatrix}
    1      & 0 \\
     0      & -1 \\
 \end{bmatrix}
 \begin{bmatrix}
    z      & -\gamma_j \\
     -z\gamma_j       & 1 \\
 \end{bmatrix}
 \begin{bmatrix}
     1      & 0 \\
      0      & -1 \\
  \end{bmatrix} =  \begin{bmatrix}
      z      & \gamma_j \\
      z\gamma_j       & 1 \\
   \end{bmatrix}\,.
  $$
  Therefore, \eqref{aa1} implies
  $$\prod_{j=k-1}^0 \begin{bmatrix}
      z      & \gamma_j \\
       z\gamma_j       & 1 \\
   \end{bmatrix}
   =
   \begin{bmatrix}
       1      & 0 \\
        0      & -1 \\
    \end{bmatrix}
    A
    \begin{bmatrix}
        1      & 0 \\
         0      & -1 \\
     \end{bmatrix}\,.
     $$
Combining the above results, we get
     $$\prod_{j=0}^{k-1} \begin{bmatrix}
           z      & \gamma_j \\
            z\gamma_j       & 1 \\
        \end{bmatrix}
        =
\begin{bmatrix}
                1      & 0 \\
                 0      & -z \\
             \end{bmatrix}
         A^T
         \begin{bmatrix}
                        1      & 0 \\
                         0      & -z^{-1} \\
                     \end{bmatrix}\,,
                     $$
and so
$$
 \begin{bmatrix}
                \Phi_{2k}      & \Psi_{2k} \\
                 \Phi_{2k}^*      & -\Psi_{2k}^*\\
   \end{bmatrix}
             =
 \begin{bmatrix}
 1      & 0 \\
0      & -z \\
 \end{bmatrix}
 \begin{bmatrix}
 a      & c \\
 b      & d \\
 \end{bmatrix}
\begin{bmatrix}
1      & 0 \\
0      & -z^{-1} \\
\end{bmatrix}
\begin{bmatrix}
a      & b \\
 c     & d \\
\end{bmatrix}
\begin{bmatrix}
 1      & 1 \\
 1      & -1 \\
 \end{bmatrix}
$$

$$=  \begin{bmatrix}
                a(a+b)-z^{-1}c(c+d)      & a(a-b)+z^{-1}c(d-c) \\
                 d(c+d)-zb(a+b)     & d(c-d)+zb(b-a) \\
             \end{bmatrix}\,.
             $$
Thus,
$$2\Phi_{2k} = \Phi_{k}^2+\Phi_{k}\Psi_{k} - z^{-1}(\Phi_{k}^*)^2 + z^{-1}\Phi_{k}^*\Psi_{k}^*$$
and
$$2\Phi_{2k}^* = (\Phi_{k}^*)^2 + \Phi_{k}^*\Psi_{k}^* - z\Phi_{k}^2+z\Phi_{k}\Psi_{k}.$$
\end{proof}

The following result is well-known.

\begin{lemma}\label{rot} If $\{\alpha_j\}$ are the Schur parameters for the
measure $\sigma(\theta)$, then parameters $\{\alpha_j^{(\beta)}\}$
for the translated measure $\sigma^{(\beta)}=\sigma(\theta-\beta)$
are given by
\[
\alpha_j^{(\beta)}=e^{-i(j+1)\beta}\alpha_j, \quad j=0,1,\ldots
\]
\end{lemma}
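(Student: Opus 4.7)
The plan is to guess the monic orthogonal polynomials for the translated measure directly and then verify the guess. Set $\hat\Phi_n(z) := e^{in\beta}\Phi_n(e^{-i\beta}z,\sigma)$. The leading coefficient is $e^{in\beta}(e^{-i\beta})^n=1$, so $\hat\Phi_n$ is monic of degree $n$. Orthogonality against $\sigma^{(\beta)}$ follows from the change of variables $\phi=\theta-\beta$: for $0\le j\le n-1$,
\[
\int_{-\pi}^{\pi}\hat\Phi_n(e^{i\theta})e^{-ij\theta}\,d\sigma^{(\beta)}(\theta)
= e^{i(n-j)\beta}\int_{-\pi}^{\pi}\Phi_n(e^{i\phi},\sigma)e^{-ij\phi}\,d\sigma(\phi)=0,
\]
so by uniqueness of the monic OPUC, $\Phi_n(z,\sigma^{(\beta)})=\hat\Phi_n(z)$.

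Next I will track the $*$-operation under this transformation. Writing out $\hat\Phi_n^*(z)=z^n\overline{\hat\Phi_n(1/\bar z)}$ and comparing with $\Phi_n^*(w)=w^n\overline{\Phi_n(1/\bar w)}$ evaluated at $w=e^{-i\beta}z$, the factor $e^{in\beta}$ cancels exactly against its conjugate, yielding the clean identity $\hat\Phi_n^*(z)=\Phi_n^*(e^{-i\beta}z,\sigma)$. This phase bookkeeping is the only place where any care is required — it is easy to pick up a spurious factor of $e^{\pm i\beta}$ or to confuse $\bar z^{-1}$ with $z^{-1}$ — but once $w=e^{-i\beta}z$ is substituted consistently, both sides match as polynomials in $z$.

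Finally, I evaluate the Szeg\H{o} recurrence \eqref{recur} for $\Phi_n(\cdot,\sigma)$ at the point $e^{-i\beta}z$, multiply both lines by $e^{i(n+1)\beta}$, and substitute the two identities just established, obtaining
\[
\hat\Phi_{n+1}(z)=z\,\hat\Phi_n(z)-\overline{\alpha_n}\,e^{i(n+1)\beta}\,\hat\Phi_n^*(z).
\]
Comparing with the Szeg\H{o} recurrence for $\sigma^{(\beta)}$, one reads off $\overline{\alpha_n^{(\beta)}}=e^{i(n+1)\beta}\overline{\alpha_n}$, i.e.\ $\alpha_n^{(\beta)}=e^{-i(n+1)\beta}\alpha_n$, which is the claimed formula.
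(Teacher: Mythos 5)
Your proof is correct and follows essentially the same route as the paper's: both rest on the transformation law $\Phi_n(z,\sigma^{(\beta)})=e^{in\beta}\Phi_n(e^{-i\beta}z,\sigma)$ and then extract the parameter from the Szeg\H{o} recurrence. The paper takes a small shortcut at the final step --- it evaluates the recurrence at $z=0$ and uses $\Phi_{j+1}(0,\sigma^{(\beta)})=-\overline{\alpha_j^{(\beta)}}$, which makes your (correct) $*$-polynomial bookkeeping unnecessary.
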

\begin{proof} The proof follows from making the following two
observations:
\[
\Phi_n(z,\sigma^{(\beta)})=e^{in\beta}\Phi_n(ze^{-i\beta})
\]
(which follows from the definition) and, take $z=0$ in
\eqref{recur},
$\Phi_{j+1}(0,\sigma^{(\beta)})=-\overline{\alpha}_j^{(\beta)}$.
\end{proof}
 We will need the following  decoupling Lemma. Its proof
is contained in \cite{adt,d3}. However, following \cite{d3}, we
include the sketch for the reader's convenience.

\begin{lemma}\label{decop}
Suppose we are given a  polynomial $\phi_n$ of degree $n$ and
Carath\'eodory function $\~F$
 which satisfy the following properties
\begin{itemize}
\item[1.] $\phi_n^*(z)$ has no roots in $\overline{\mathbb D}$.
\item[2.] Normalization on the size and ``rotation\!"
\begin{equation}\label{norma}
\int_\mathbb T|\phi_n^*(z)|^{-2}d\theta
=2\pi~,\quad\phi_n^*(0)>0\,\,.
\end{equation}

\item[3.] $\~F\!\in\!C^\infty(\mathbb T)$, $\Re\~F>0$ on $\mathbb T$, and
\begin{equation}\label{norka}
\frac{1}{2\pi}\int_\mathbb T\Re\~F(e^{i\theta})d\theta=1\,\,.
\end{equation}

\end{itemize}
Denote the Schur parameters given by the probability measures
$\mu_n$ and $\~\sigma$
\[
d\mu_n=\frac{d\theta}{2\pi|\phi_n^*(e^{i\theta})|^2}, \quad
d\~\sigma=\~\sigma'd\theta=\frac{\Re \~F(e^{i\theta})}{2\pi}d\theta,
\]
as $\{\gamma_j\}$ and $\{\~\gamma_j\}$, respectively. Then, the
probability measure $\sigma$, corresponding to Schur coefficients
\[
\gamma_0,\ldots, \gamma_{n-1},\~\gamma_0,\~\gamma_1,\ldots
\]
is purely absolutely continuous with the weight given by
\begin{equation}\label{mp}
\sigma'=\frac{4\~\sigma'}{|\phi_n+\phi_n^*+\~F(\phi_n^*-\phi_n)|^2}=\frac{2\Re\~F}
{\pi|\phi_n+\phi_n^*+\~F(\phi_n^*-\phi_n)|^2}\,\,.
\end{equation}
The polynomial $\phi_n$ is the $n-$th orthonormal polynomial for $\sigma$.
\end{lemma}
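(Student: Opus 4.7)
The plan is to compute the Carath\'eodory function $F=\mathcal S(\sigma)$ of the glued measure in closed form and read off $\sigma'$ from $\Re F$ on $\mathbb T$. First I would observe that the hypotheses on $\phi_n^*$---zero-free on $\overline{\mathbb D}$, $\phi_n^*(0)>0$, and the $L^2$-normalization \eqref{norma}---say exactly that $d\mu_n=d\theta/(2\pi|\phi_n^*|^2)$ is a probability Bernstein--Szeg\H{o} measure whose $n$-th orthonormal polynomial is $\phi_n$, whose Schur parameters $\gamma_0,\dots,\gamma_{n-1}$ terminate, and whose polynomials of the second kind $\psi_n,\psi_n^*$ are generated by \eqref{recur} with the flipped parameters $\{-\gamma_j\}_{j=0}^{n-1}$. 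Since the first $n$ Schur parameters of $\sigma$ coincide with those of $\mu_n$, \eqref{recur} forces $\Phi_j(z,\sigma)=\Phi_j(z,\mu_n)$ for $0\le j\le n$; this immediately proves the final claim that $\phi_n$ is the $n$-th orthonormal polynomial of $\sigma$.

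The core step is Geronimus' concatenation (coefficient-stripping) identity. Denoting by $\~f(z)=(\~F(z)-1)/(z(\~F(z)+1))$ the Schur function of $\~\sigma$, I would prove by induction on $n$, directly from \eqref{recur}, that the Carath\'eodory function of the glued measure equals
\[
F(z)\;=\;\frac{\psi_n^*(z)+z\~f(z)\,\psi_n(z)}{\phi_n^*(z)-z\~f(z)\,\phi_n(z)}\;=\;\frac{(\psi_n+\psi_n^*)\~F+(\psi_n^*-\psi_n)}{(\phi_n+\phi_n^*)+\~F(\phi_n^*-\phi_n)}\,.
\]
The base case $n=0$ reduces to $F=\~F$; the inductive step is a single reverse Schur iteration, using the relation $f_n=(\gamma_n+zf_{n+1})/(1+\bar\gamma_n zf_{n+1})$ between consecutive Schur iterates, whereupon the $\Phi$- and $\Psi$-recursions in \eqref{recur} collapse the expression exactly to the formula at level $n+1$. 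As a sanity check, substituting $\~f\equiv0$ recovers the Carath\'eodory function $\psi_n^*/\phi_n^*$ of $\mu_n$, as it should.

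To extract \eqref{mp}, I would let $N,D$ denote the numerator and denominator in the second form above and compute $2|D|^2\Re F=N\bar D+\bar N D$ on $\mathbb T$ using the boundary reversals $\overline{\phi_n^*}=z^{-n}\phi_n$ and $\overline{\psi_n^*}=z^{-n}\psi_n$. The $|\~F|^2$ terms and the $\~F$-independent terms cancel, leaving a single cross term proportional to $\Re\~F$ whose coefficient simplifies to $2(\phi_n\psi_n^*+\phi_n^*\psi_n)$. The Szeg\H{o} bilinear identity
\[
\phi_n\psi_n^*+\phi_n^*\psi_n\;=\;2z^n,
\]
itself a short induction from \eqref{recur} that exploits the $(1-|\gamma_n|^2)$ factor arising in orthonormal normalization, then yields $N\bar D+\bar N D=8\Re\~F$, which is precisely \eqref{mp} after dividing by $2\pi$. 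Absolute continuity and smoothness of $\sigma$ are inherited from $\~F\in C^\infty(\mathbb T)$, $\Re\~F>0$, and the zero-freeness of $\phi_n^*$ on $\overline{\mathbb D}$, which together make $D$ smooth and nonvanishing on $\mathbb T$. The main obstacle is pinning down the correct form of Geronimus' identity in the middle step; once it is in hand, the real-part computation is mechanical.
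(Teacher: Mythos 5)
Your proof is correct, but it follows a genuinely different route from the paper's. The paper glues at the level of transfer matrices: it writes $\phi_{n+m}^*$ for the glued measure as $\tfrac12\widetilde\phi_m^*\bigl(\phi_n+\phi_n^*+\widetilde F_m(\phi_n^*-\phi_n)\bigr)$ with $\widetilde F_m=\widetilde\psi_m^*/\widetilde\phi_m^*$, invokes Baxter's theorem (since $\widetilde F\in C^\infty$ forces $\{\widetilde\gamma_j\}\in\ell^1$) to get pure absolute continuity and the uniform convergences $\phi_m^*\to\Pi$, $\widetilde\phi_m^*\to\widetilde\Pi$, $\widetilde F_m\to\widetilde F$, and then reads off \eqref{mp} from $|\Pi|^{-2}=2\pi\sigma'$. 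You instead use the exact pointwise Geronimus (coefficient-stripping) relation to write $F=\mathcal S(\sigma)$ in closed form and compute $\Re F$ on $\mathbb T$ directly via $\overline{\phi_n^*}=z^{-n}\phi_n$ and the Wronskian identity $\phi_n\psi_n^*+\phi_n^*\psi_n=2z^n$; your algebra checks out and reproduces $\Re F=4\Re\widetilde F/|D|^2$, i.e. \eqref{mp}. Your route has the advantage of being exact at finite $n$ (no $m\to\infty$ limit, no appeal to Szeg\H{o} asymptotics), and it is essentially Simon's Theorem 3.2.4 combined with the uniqueness of the Schur function with prescribed parameters. Two small points to make airtight: (i) state and prove the stripping identity with the correct normalization (the sanity checks $\widetilde f\equiv0$ and $n=0$ that you mention do pin it down); (ii) for pure absolute continuity you are implicitly using that $F$ extends continuously to $\overline{\mathbb D}$ because $z\widetilde f=(\widetilde F-1)/(\widetilde F+1)$ is continuous with modulus $<1$ on $\mathbb T$ and $\phi_n^*-z\widetilde f\,\phi_n$ is zero-free on $\overline{\mathbb D}$ — spell that out, since it replaces the paper's use of Baxter's criterion.
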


\begin{proof}
First, notice that $\{\~\gamma_j\}\in \ell^1$ by Baxter's Theorem
(see, e.g., \cite{sim1}, Vol.1, Chapter 5). Therefore, $\sigma$ is
purely absolutely continuous by the same Baxter's criterion. Define
the orthonormal polynomials of the first/second kind corresponding
to measure $\~\sigma$ by $\{\~\phi_j\}, \{\~\psi_j\}$. Similarly,
let $\{\phi_j\}, \{\psi_j\}$ be orthonormal polynomials for
$\sigma$. Since, by construction, $\mu_n$ and $\sigma$ have
identical first $n$ Schur parameters, $\phi_n$ is $n$-th orthonormal
polynomial for~$\sigma$.

 Let
us compute the polynomials $\phi_j$ and $\psi_j$, orthonormal with
respect to $\sigma$, for the indexes $j>n$. The recursion can be
rewritten in the following matrix form
\begin{equation}\label{m-ca}
\left(\begin{array}{cc}
\phi_{n+m} & \psi_{n+m}\\
\phi_{n+m}^* & -\psi_{n+m}^*
\end{array}\right)=\left(\begin{array}{cc}
{\cal A}_m & {\cal B}_m\\
{\cal C}_m & {\cal D}_m
\end{array}\right)\left(\begin{array}{cc}
\phi_{n} & \psi_{n}\\
\phi_{n}^* & -\psi_{n}^*
\end{array}\right)\,,\end{equation}
where ${\cal A}_m, {\cal B}_m, {\cal C}_m, {\cal D}_m$ satisfy
\begin{eqnarray*}\left(\begin{array}{cc}
{\cal A}_0 & {\cal B}_0\\
{\cal C}_0 & {\cal D}_0
\end{array}\right)=\left(\begin{array}{cc}
1 & 0\\
0 & 1
\end{array}\right),\hspace{6cm}\\
\left(\begin{array}{cc}
{\cal A}_m & {\cal B}_m\\
{\cal C}_m & {\cal D}_m
\end{array}\right)=\frac1{\~\rho_0\cdot\ldots\cdot\~\rho_{m-1}}\left(\begin{array}{cc}
z & -\overline{\~\gamma}_{m-1}\\
-z\~\gamma_{m-1} & 1
\end{array}\right)\cdot\ldots\cdot\left(\begin{array}{cc}
z & -\overline{\~\gamma}_0\\
-z\~\gamma_0 & 1
\end{array}\right)\end{eqnarray*}
and thus depend only on $\~\gamma_0,\ldots,\~\gamma_{m-1}$.
Moreover, we have
\[\left(\begin{array}{cc}
\~\phi_m & \~\psi_m\\
\~\phi_m^* & -\~\psi^*_m
\end{array}\right)=\left(\begin{array}{cc}
{\cal A}_m & {\cal B}_m\\
{\cal C}_m & {\cal D}_m
\end{array}\right)\left(\begin{array}{cc}
1 & 1\\
1 & -1
\end{array}\right)
\,\,.
\]
Thus, ${\cal A}_m\!=\!(\~\phi_m\,{+\,\~\psi_m)/2,~{\cal
B}_m\!=\!(\~\phi_m\,-}\,\~\psi_m)/2,~ {\cal
C}_m\!=\!(\~\phi^*_m\,{-\,\~\psi^*_m)/2,~{\cal
D}_m\!=\!(\~\phi^*_m\,+}\,\~\psi^*_m)/2$ and their substitution into
\eqref{m-ca} yields
\begin{equation}\label{intert}
2\phi_{n+m}^*=\phi_n(\~\phi_m^*-\~\psi^*_m)+\phi_n^*(\~\phi_m^*+\~\psi^*_m)=
\~\phi_m^*\left(\phi_n+\phi_n^*+\~
F_m(\phi_n^*-\phi_n)\right)\end{equation} where
\[
\~F_m(z)=\frac{\~\psi^*_m(z)}{\~\phi^*_m(z)}\,\,.
\]
Since $\{\~\gamma_n\}\!\in\!\ell^1$ and $\{\gamma_n\}\!\in\!\ell^1$,
we have (\cite{sim1}, p.~225)
\[
\~F_m\to\~F~{\rm as~}m\to\infty~{\rm and~}
\phi_j^*\to\Pi,~\~\phi_j^*\to\~\Pi~{\rm as~}j\to\infty\,\,.
\]
uniformly on $\overline{\mathbb D}$. The functions $\Pi$ and $\~\Pi$
are related to $\sigma$ and $\~\sigma$ as follows: they are the
outer functions in $\mathbb D$ that satisfy
\begin{equation}\label{facti}
|\Pi|^{-2}=2\pi\sigma',\quad |\~\Pi|^{-2}=2\pi\~\sigma',\quad
\Pi(0)>0,\quad \~\Pi(0)>0
\end{equation}
on $\mathbb{T}$. In \eqref{intert}, send $m\to\infty$ to get
\begin{equation}\label{facti1}
2\Pi=\~\Pi\left(\phi_n+\phi_n^*+\~F(\phi_n^*-\phi_n)\right)
\end{equation}
and we have \eqref{mp} after taking the square of absolute values
and using \eqref{facti}.
\end{proof}

The next result has to do with the recent analysis of the
asymptotics of the polynomials orthogonal with the respect to the
so-called Fisher-Hartwig weights. We will use \cite{deift} as the
main reference.\smallskip

Consider the weight on $\mathbb{T}$ given by

\begin{equation}\label{ves}
f(z) = e^\epsilon g_{i, -\frac{i\epsilon}{\pi}}(z)g_{-i,
\frac{i\epsilon}{\pi}}(z),
\end{equation}
where $z = e^{i\theta}, \, \theta \in [0, 2\pi)$ and $\epsilon\in
(0,\epsilon_0]$ with $\epsilon_0$ to be chosen sufficiently small.
For $z_j = e^{i\theta_j}$,
\[
 g_{z_j, \beta_j} =  \left\{
     \begin{array}{lr}
       e^{i\pi\beta_j} & : 0\leq\arg z < \theta_j\\
       e^{-i\pi\beta_j} & : \theta_j \leq \arg z < 2\pi
     \end{array}
     \right..
\]
That is, $f$ is a weight with two jumps, one from $e^\epsilon$ to
$e^{-\epsilon}$ around $i$ (in counterclockwise direction), and one
from $e^{-\epsilon}$ back to $e^\epsilon$ around $-i$ (again in
counterclockwise direction). It does not define a probability
measure but this will not influence the polynomials much due to
\eqref{nm},\eqref{ra1}. Notice that $f$ is symmetric with respect to
$\mathbb{R}$ so all its recursion parameters are real.

We will need the following two Lemmas the proofs of which
(essentially contained in \cite{deift}) will be discussed in the
Appendix.

\begin{lemma}\label{l4}
The Schur parameters $\{\gamma_j\}$ associated to the $f$ above
satisfy:
$$\gamma_j =-(j+1)^{-1}i^{j+1}\left((2(j+1))^{\frac{2i\epsilon}{\pi}}
\frac{\Gamma(1-\frac{i\epsilon}{\pi})}{\Gamma\left(\frac{i\epsilon}{\pi}\right)}
+ (-1)^{j+1} (2(j+1))^{-\frac{2i\epsilon}{\pi}}
\frac{\Gamma\left(1+\frac{i\epsilon}{\pi}\right)}
{\Gamma\left(\frac{-i\epsilon}{\pi}\right)}\right)+r_{j,\epsilon},
\quad |r_{j,\epsilon}|<C_\epsilon (j+1)^{-2}\,.$$
\end{lemma}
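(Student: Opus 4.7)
The weight $f$ is a Fisher-Hartwig symbol of pure-jump type: there are no Szeg\H{o} power or logarithmic factors, only two jump discontinuities, located at $z_1=i$ and $z_2=-i$ with jump parameters $\beta_1=-i\epsilon/\pi$ and $\beta_2=i\epsilon/\pi$, multiplied by the constant $e^\epsilon$. By \eqref{ra1}, the constant prefactor has no effect on the monic orthogonal polynomials and hence none on $\{\gamma_j\}$. The plan is to invoke the Riemann-Hilbert asymptotics for Verblunsky coefficients associated with this class of weights as developed in \cite{deift}, and then to reduce the resulting expression to the form asserted by the Lemma.

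The bridge between the Schur parameters and the polynomials is a one-line consequence of \eqref{recur}: evaluating at $z=0$ and using $\Phi_j^*(0)=1$ gives $\Phi_{j+1}(0)=-\overline{\gamma_j}$. Since $f$ is symmetric with respect to $\mathbb{R}$ the parameters $\gamma_j$ are real, so $\gamma_j=-\Phi_{j+1}(0,f)$. It therefore suffices to obtain the asymptotic expansion of $\Phi_{j+1}(0,f)$ to leading order plus an $O((j+1)^{-2})$ remainder.

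The result imported from \cite{deift} is that for a Fisher-Hartwig symbol with pure jumps at points $z_l\in\mathbb{T}$ with parameters $\beta_l$ (lying in a fixed compact set avoiding the poles of $\Gamma(\beta_l)$), each singular point contributes to $\Phi_{n+1}(0)$ an explicit term of the shape
\[
\frac{z_l^{-(n+1)}}{n+1}\,(2(n+1))^{2\beta_l}\,\frac{\Gamma(1-\beta_l)}{\Gamma(\beta_l)},
\]
with an error that is uniformly $O((n+1)^{-2})$. In the present setting $\beta_l=\pm i\epsilon/\pi$ is purely imaginary with $|\beta_l|\le\epsilon_0/\pi$, so the parameters remain in a fixed compact set avoiding the poles of $\Gamma$, the error is uniform in $j$, and the implicit constant gives $C_\epsilon$. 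Using $i^{-(j+1)}=(-1)^{j+1}i^{j+1}$ and $(-i)^{-(j+1)}=i^{j+1}$, and pulling the common factor $i^{j+1}/(j+1)$ out of the two contributions from $z_1=i$ (with $\beta_1=-i\epsilon/\pi$) and $z_2=-i$ (with $\beta_2=i\epsilon/\pi$), yields the two-term expression in the Lemma; the outer minus sign is supplied by $\gamma_j=-\Phi_{j+1}(0)$.

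The main obstacle is not this algebraic identification but the derivation of the Verblunsky-coefficient asymptotics itself. That derivation requires the local parametrix at each Fisher-Hartwig singularity (built from confluent hypergeometric functions and ultimately producing the factor $\Gamma(1-\beta_l)/\Gamma(\beta_l)$ together with the multiplicative $(2(n+1))^{2\beta_l}$), and the bookkeeping of phases so that $z_l^{-(n+1)}$ appears with the correct branch. These computations are carried out in \cite{deift}; the task in the Appendix is to match conventions carefully and to verify that in our concrete two-jump, purely imaginary-$\beta_l$ situation all the hypotheses of their theorem hold uniformly in $j$, which gives the stated $|r_{j,\epsilon}|<C_\epsilon(j+1)^{-2}$ bound.
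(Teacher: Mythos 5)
Your proposal is correct and follows essentially the same route as the paper: both reduce the claim to the relation $\gamma_j=-\overline{\Phi_{j+1}(0)}$ (with real parameters by symmetry) and then read off $\Phi_{k}(0)$ from formula (1.23) of \cite{deift} for pure-jump Fisher--Hartwig symbols, the two jump contributions combining into the stated two-term expression. Your version of the imported formula attributes the two terms to the opposite singularities compared with the paper's transcription, but since $z_2=z_1^{-1}$ and $\beta_2=-\beta_1$ the sums coincide, and the algebraic reduction to the Lemma's formula checks out.
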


The next Lemma will be needed in the proof of Theorem \ref{th1}.

\begin{lemma}\label{l1} Let $\epsilon\in (0,\epsilon_0]$ and $n>n_0(\epsilon)$. Then, for the weight $f$ given by \eqref{ves}, the $n$-th associated
monic polynomials of the first and second kinds $\Phi_n$ and $\Psi_n$ respectively, and their $*$-polynomials
 $\Psi_n^* $ and $\Phi_n^*$ satisfy the following estimates:

 \begin{equation}|\Phi_n^*(z)|\sim 1, \quad z \in \mathbb{T},\label{uno}
\end{equation}

\begin{equation}\|\Phi_n^*\Psi_n^*+z\Phi_n\Psi_n\|_{L^\infty(\mathbb{T})}>_{\epsilon}\ln
n\label{duo}, \quad z\in \mathbb{T}\,,
\end{equation}

\begin{equation}\label{trez}
\left|\frac{\Psi_n^*(z)}{\Phi_n^*(z)} +
\frac{\Psi_n^*(-z)}{\Phi_n^*(-z)}\right|\lesssim 1, \quad z \in
\mathbb{T}\,.
\end{equation}
\end{lemma}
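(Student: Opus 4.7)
The plan is to deduce all three estimates from the Deift--Its--Krasovsky Riemann--Hilbert asymptotics of \cite{deift} applied to the Fisher--Hartwig weight $f$, whose singularities at $z_{1,2}=\pm i$ carry the purely imaginary parameters $\beta_{1,2}=\mp i\epsilon/\pi$. That machinery supplies uniform asymptotics on $\mathbb{T}$ for $\Phi_n,\Phi_n^*$ in terms of the Szeg\H{o} function $\Pi$, a global outer parametrix, and local parametrices built from confluent hypergeometric functions. Denoting by $\sigma^{\Psi}$ the measure with Schur parameters $\{-\gamma_j\}$ (so that $\Psi_n$ is its $n$-th first-kind orthogonal polynomial), whose Carath\'eodory function equals $1/F$ and whose density equals $\sigma'/|F|^2$, the same machinery gives asymptotics for $\Psi_n,\Psi_n^*$.

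For \eqref{uno}: since $f$ takes only the two values $e^{\pm\epsilon}$, the outer function $\Pi$ satisfies $|\Pi|\sim 1$ uniformly on $\mathbb{T}$. Away from $\pm i$ the RH asymptotic $\phi_n^*=\Pi(1+o(1))$ is uniform, while inside each local parametrix region the confluent-hypergeometric factors of the form $\zeta^{\pm i\epsilon/\pi}$ have modulus bounded above and below precisely because the exponents are purely imaginary. Combined with the convergence of $\|\Phi_n\|=\prod_{j<n}\rho_j$ to a positive constant (guaranteed by $\gamma_j=O(1/j)$ from Lemma~\ref{l4}), this yields \eqref{uno}.

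For \eqref{duo}: on $\mathbb{T}$ the relation $\Phi_n^*=z^n\overline{\Phi_n}$ gives the pointwise identity
\[
\bigl|\Phi_n^*\Psi_n^*+z\Phi_n\Psi_n\bigr|=2|\Phi_n\Psi_n|\,\Bigl|\cos\bigl(\tfrac{(2n-1)\theta}{2}-\arg(\Phi_n\Psi_n)\bigr)\Bigr|.
\]
By \eqref{uno} one has $|\Phi_n|\sim 1$, so the task reduces to locating $\theta$ with $|\Psi_n|>_\epsilon\ln n$ at which the cosine factor is of order one. The Carath\'eodory function $F$ of the step weight $f$ has logarithmic singularities at $\pm i$ (since $\Im F$ is the conjugate of the piecewise-constant $f$, which blows up like $\log$ at each jump), hence $(\sigma^{\Psi})'=\sigma'/|F|^2$ vanishes like $(\log|z\mp i|^{-1})^{-2}$ there; the RH/parametrix analysis then produces $|\Psi_n(z)|\sim_\epsilon\ln n$ on an arc of length $\sim 1/n$ around each of $\pm i$. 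On such an arc the phase $(2n-1)\theta/2$ traverses an interval of length of order one, so $\theta$ can be chosen with $|\cos|\ge 1/2$ and $|\Psi_n|>_\epsilon\ln n$ simultaneously, giving \eqref{duo}.

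For \eqref{trez}: I exploit the exact identity $F(z)+F(-z)\equiv 2$, which follows from the symmetry $f(-z)\equiv 1/f(z)$, hence $f(z)+f(-z)\equiv 2\cosh\epsilon$: the pushforward of $\sigma$ under $z\mapsto z^2$ is then the uniform probability measure on $\mathbb{T}$, whose Carath\'eodory function is identically $1$, and
\[
F(z)+F(-z)=2\int_{\mathbb{T}}\frac{w^2+z^2}{w^2-z^2}\,d\sigma(w)\equiv 2.
\]
Combined with the RH approximation $\Psi_n^*/\Phi_n^*=F+O_\epsilon(1)$ uniformly on $\mathbb{T}$---the logarithmic blow-up of $F$ near $\pm i$ being matched by that of $\Psi_n^*$ in the local parametrix, leaving a bounded error---evaluating at $z$ and $-z$ proves \eqref{trez}. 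The principal obstacle will be making the local RH analysis precise for the measure $\sigma^{\Psi}$, which is not of standard Fisher--Hartwig type but has logarithmic vanishing at $\pm i$; this is what simultaneously underwrites the sharp $\ln n$ lower bound in \eqref{duo} and the uniform $O_\epsilon(1)$ remainder needed for \eqref{trez}.
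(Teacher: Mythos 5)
Your high-level strategy—deduce all three estimates from the Deift–Its–Krasovsky Riemann–Hilbert analysis of the step weight, exploiting the algebraic symmetry $\Phi_n^*=z^n\overline{\Phi_n}$ for (2) and the exact cancellation $F(z)+F(-z)=\text{const}$ for (3)—matches the paper's plan. Two structural observations you make are genuinely nice: the cosine identity for (2), and the pushforward-under-$z\mapsto z^2$ argument for $F(z)+F(-z)$ (though the constant is $2\cosh\epsilon$, not $2$, since $F=\mathcal S(f/2\pi)$ is not normalized to a probability measure). However, there are concrete gaps at precisely the places the paper has to work hardest.

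For (1), the assertion that the local parametrix factors ``have modulus bounded above and below because the exponents $\pm i\epsilon/\pi$ are purely imaginary'' is not the point. The confluent hypergeometric $\psi(a,1,\zeta)$ has a genuine $\ln\zeta$ singularity at $\zeta=0$ (see \eqref{smallzeta}), so each entry of the local parametrix blows up logarithmically as $z\to\pm i$. The bound $|\Phi_n^*|\sim 1$ holds only because of an exact cancellation: in the combination $\Omega(\zeta,\epsilon)=\psi(-i\epsilon/\pi,1,\zeta)-e^{\zeta}\frac{\Gamma(1+i\epsilon/\pi)}{\Gamma(-i\epsilon/\pi)}\psi(1+i\epsilon/\pi,1,e^{-i\pi}\zeta)$ that appears in $Y_{21}$, the $\ln\zeta$ coefficients match and the remainder is controlled via the digamma reflection formula \eqref{reflection}. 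Your sketch omits this cancellation entirely, and without it the claim $|\Phi_n^*(z)|\sim 1$ near $z=\pm i$ is unsupported.

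For (3), the stated intermediate claim ``$\Psi_n^*/\Phi_n^*=F+O_\epsilon(1)$ uniformly on $\mathbb{T}$'' is false: the left side is bounded on $\mathbb{T}$ for each fixed $n$ (it is $O(\ln n)$ by the polynomial bound), whereas $F$ has genuine logarithmic singularities at $\pm i$ and is unbounded. The identity \eqref{y_22psi_n} gives $\Psi_{n-1}^*/\Phi_{n-1}^*=F/\cosh\epsilon - 2z^nY_{22}/(\cosh\epsilon\,\Phi_{n-1}^*)$, and near $\pm i$ the second term has a compensating $\ln\zeta$ singularity through $\psi_3^{(j)}$; the cancellation is only approximate at scale $|\zeta|\lesssim 1$ and must be verified. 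The paper does this by reducing to the bilinear quantity $z^n\widetilde Y_{22}^{(1)}(z)\widetilde Y_{21}^{(2)}(-z)+(-z)^n\widetilde Y_{22}^{(2)}(-z)\widetilde Y_{21}^{(1)}(z)$, expanding in the small-$\zeta$ regime, and showing the $\ln u$ coefficient vanishes after the reflection formula and the identity $\cot z=i(e^{iz}+e^{-iz})/(e^{iz}-e^{-iz})$. You flag this as ``the principal obstacle'' but do not resolve it; as written, (3) does not follow. A similar remark applies to (2): the cosine identity is correct, but you still need to show that $\arg(\Phi_n\Psi_n)$ does not counter-rotate with $(2n-1)\theta/2$ on the chosen arc, which the paper handles by working at scale $|\theta-\pi/2|\sim n^{-1/2}$ (so $\zeta\sim\sqrt n$ and the large-$\zeta$ asymptotics \eqref{largezeta} give explicit control of the phase), rather than at the scale $1/n$ you propose, where $|\zeta|\sim 1$ and neither the small- nor large-$\zeta$ expansion is straightforwardly dominant.
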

{\bf Remark.} The following general identity is immediate from
Szeg\H{o} recursion by taking the determinant
\[
\det\left[
\begin{array}{cc}
\Phi_n & \Psi_n\\
\Phi_n^*& -\Psi_n^*
\end{array}
\right]=-2z^n\prod_{j=0}^{n-1}(1-|\gamma_j|^2)
\]
so
\[
\overline{\Phi}_n^*\Psi_n^*+\Phi_n^*\overline{\Psi}_n^*=2\prod_{j=0}^{n-1}(1-|\gamma_j|^2)\,.
\]
Then, \eqref{uno} gives
\begin{equation}\label{quatro}
\left|\frac{\Psi_n^*}{\Phi_n^*}+\overline{\left(\frac{\Psi_n^*}{\Phi_n^*}\right)}\right|\lesssim
1
\end{equation}
uniformly over $\mathbb{T}$ if the polynomials correspond to the
weight \eqref{ves}.\bigskip

\bigskip

Now we are in position to give a proof of Theorem \ref{th1}.

\begin{proof}{\it(of Theorem \ref{th1}).}
Given $\epsilon>0$ and large $n$, we consider $f$ defined by
\eqref{ves}. If the corresponding coefficients are denoted by
$\{\alpha_j\}$, we consider the weight $w^{(n)}$ given by Schur
parameters $\{\gamma_j^{(n)}\}$ defined as
\[
\gamma_j^{(n)}=\left\{
\begin{array}{cc}
\alpha_j, & j\leq n-1\\
-\alpha_{j'}, &j'=2n-1-j, \, n\leq j\leq 2n-1\\
0, & j=2n\\
(-1)^{j-2n}\alpha_{j-2n-1}, & 2n+1\leq j\leq 3n
\end{array}
\right.\,.
\]
Now, $\eqref{qq1}$ follows immediately from Lemma \ref{l4} and we
need to show \eqref{raz} and \eqref{dva}.

Let us prove \eqref{dva}. Notice first that $|\phi_j|\sim |\Phi_j|$
by \eqref{nm} and \eqref{ra1} so it is sufficient to consider
$\Phi_{2n+1}$. We apply Lemma \ref{l2} to say
$$ 2\Phi_{2n} = \Phi_n^2+\Phi_n\Psi_n-z^{-1}{\Phi_n^*}^2+z^{-1}\Phi_n^*\Psi_n^*\,,$$
$$2\Phi_{2n}^* = {\Phi_n^*}^2+\Phi_n^*\Psi_n^*-z\Phi_n^2+z\Phi_n\Psi_n\,.$$
Since the $\gamma_{2n}^{(n)}=0$, we get $\Phi_{2n+1} = z\Phi_{2n},
\Psi_{2n+1} = z\Psi_{2n}, \Phi_{2n+1}^* = \Phi_{2n}^*$,
 and $\Psi_{2n+1}^* = \Psi_{2n}^*$ so
\begin{equation}\label{idi} 2\Phi_{2n+1} =
z\Phi_n^2+z\Phi_n\Psi_n-{\Phi_n^*}^2+\Phi_n^*\Psi_n^*,\quad
2\Phi_{2n+1}^* =
{\Phi_n^*}^2+\Phi_n^*\Psi_n^*-z\Phi_n^2+z\Phi_n\Psi_n.\end{equation}
We have
\[
2\|\Phi_{2n+1}\|_{\infty}\geq
\|\Phi_n^*\Psi_n^*+z\Phi_n\Psi_n\|_\infty-2\|\Phi_n\|_{\infty}^2>_{\epsilon}\ln
n
\]
by Lemma \ref{l1}.

To show \eqref{raz}, we will use Lemma \ref{decop}. We choose
Carath\'eodory function for the decoupled problem as
 \begin{equation}\label{ees1}\widetilde{F}(z) =\frac{\psi_n^*(-z)}{\phi_n^*(-z)}=
\frac{\Psi_n^*(-z)}{\Phi_n^*(-z)}=-\frac{\Psi_n^*(z)}{\Phi_n^*(z)} +
O(1)\quad  \text{ by  \eqref{trez}}.\end{equation} We have (see
\cite{sim1}, Theorem 3.2.4)
$$\Re \widetilde{F}(e^{i\theta}) = |\phi_n^*(e^{i(\theta+\pi)})|^{-2}$$
and $\widetilde F$ is Carath\'eodory function of the
Bernstein-Szeg\H{o} weight
$(2\pi)^{-1}|\phi_n^*(e^{i(\theta+\pi)})|^{-2}$ having the Schur
parameters
\[
\left\{
\begin{array}{cc}
 (-1)^{j+1}\alpha_j, &j<n\\
 0, & j\geq n
 \end{array}\right.
 \] as follows from Lemma \ref{rot}.
Since our Schur parameters $\gamma_{j}^{(n)}=0, j>3n$, we have
\[
|\widetilde\Pi(-z)|=|\phi_n^*(-z,f)|\sim  1, \quad {\rm by\quad
\eqref{uno}}.
\]
So, by the identity \eqref{facti1}, we only need to show
$$|\Phi_{2n+1}+\Phi_{2n+1}^* + \widetilde{F}(\Phi_{2n+1}^*-\Phi_{2n+1})|\lesssim 1$$
uniformly on $\mathbb{T}$.  Recall that $\displaystyle \widetilde{F}
= -\frac{\Psi_n^*}{\Phi_n^*}+O(1)$ by \eqref{trez}. We introduce
auxiliary
\[D=\frac{\Phi_n^*\Psi_n^*}{2},\quad A = -\frac{{\Phi_n^*}^2}{2}.\] Notice
that
\begin{equation}\label{sept}
|A|\sim 1, \quad \frac{D}{A} =
-\frac{\Psi_{n}^*}{\Phi_n^*}=-\overline{\left(\frac{D}{A}\right)}+O(1)
\end{equation}
by \eqref{uno} and \eqref{quatro}. We can now rewrite \eqref{idi} as
\[
\Phi_{2n+1}=-A^{*}+D^*+A+D, \quad \Phi_{2n+1}^*=-A+D+A^*+D^*\,.
\]
(where the $(*)$-operations in these  identities are of order
$2n+1$).

 Then, by \eqref{ees1},
$$\Phi_{2n+1}+\Phi_{2n+1}^* + \widetilde{F}(\Phi_{2n+1}^*-\Phi_{2n+1}) =2\left((D+D^*)+\frac{D}{A}(A^*-A)\right)+O(1)=$$
$$
2\left(D^*+\frac{D}{A}A^*\right)+O(1)=\frac{2z^{2n}}{\overline{A}}\left(
\frac{D}{A}+\overline{\left(\frac{D}{A}\right)}  \right)=O(1)
$$
by \eqref{sept}.

\end{proof}
\bigskip

\section{Appendix: properties of polynomials and Schur parameters, \\proofs of Lemma \ref{l1} and Lemma \ref{l4}}
\subsection{Setup}
We wish to analyze the asymptotics of the orthogonal polynomials
with respect to the weight $$f(z) = e^\epsilon g_{i,
-\frac{i\epsilon}{\pi}}(z)g_{-i, \frac{i\epsilon}{\pi}}(z),$$ where
$z = |z|e^{i\theta}, \theta \in [0, 2\pi)$. For $z_j =
e^{i\theta_j}$,
\[ g_{z_j, \beta_j} =  \left\{
     \begin{array}{lr}
       e^{i\pi\beta_j} & : 0\leq \arg z < \theta_j\\
       e^{-i\pi\beta_j} & : \theta_j \leq \arg z < 2\pi
     \end{array}
     \right.
\]
as defined in the main text.

This $f$ belongs to the class of Fisher-Hartwig weights considered
in \cite{deift} (see also \cite{am} for the weight on the real
line), so much of this
 Appendix consists of examining the results of \cite{deift}, which performs asymptotic analysis of a
 Riemann-Hilbert problem involving the orthogonal polynomials. Recall the
  three properties of these polynomials we need:

\begin{enumerate}
\item $$|\Phi_n^*(z)| \sim 1,$$
\item$$\|\Phi_n^*\Psi_n^* + z\Phi_n\Psi_n\|_{L^\infty(\mathbb{T})} >_\epsilon \ln n,$$
 \item$$\frac{\Psi_n^*(z)}{\Phi_n^*(z)}+\frac{\Psi_n^*(-z)}{\Phi_n^*(-z)} = O(1),$$
 \end{enumerate}
for $z\in \mathbb{T}, n>n_0(\epsilon)$.

As was noted in \cite{baik} and \cite{fokas}, the orthogonal polynomials of the first and second kinds satisfy a particular
 Riemann-Hilbert problem with contour $C = \mathbb{T}$. If $Y$ is
 defined by
\begin{equation} \label{rhp} Y(z) = \left( \begin{array}{cc}
\Phi_n(z) & \displaystyle \int_\mathbb{T}
\frac{\Phi_n(\xi)}{\xi-z}\frac{f(\xi)d\xi}{2\pi i\xi^n} \\&\\
-\Phi_{n-1}^*(z) & \displaystyle
-\int_\mathbb{T}\frac{\overline{\Phi_{n-1}}(\xi)}{\xi-z}\frac{f(\xi)d\xi}{2\pi
i\xi}  \\ \end{array} \right),
\end{equation}
then it satisfies the following Riemann-Hilbert problem:
\begin{itemize}
\item $Y(z)$ is analytic in $\mathbb{C} \backslash \mathbb{T}$.
\item For $z \in \mathbb{T} \backslash{\{i, -i\}}$, $Y$ has continuous boundary values $Y_+(z)$ as $z$ approaches $\mathbb{T}$
 from the inside, and $Y_-(z)$ from the outside, related by the jump condition
$$Y_+(z) = Y_-(z)\left( \begin{array}{cc}
1 & z^{-n}f(z) \\
0 & 1  \\ \end{array} \right).$$

\item $Y(z)$ has the following asymptotic behavior at infinity:
$$Y(z)=\left(I+O\left(\frac{1}{z}\right)\right)\left( \begin{array}{cc}
z^n & 0 \\
0 & z^{-n}  \\ \end{array} \right).$$
\item As $z \to \pm i, z \in \mathbb{C}\backslash \mathbb{T}$,
$$Y(z) = \left( \begin{array}{cc}
O(1) & O(\ln|z- \pm i|) \\
O(1) &O(\ln|z-\pm i|)  \\ \end{array} \right).$$
\end{itemize}
In what follows, we consider $n$  to be a sufficiently large but
fixed parameter.

The analysis of Riemann-Hilbert problems of this type  proceeds by:
 enclosing the singularity points $z_j$ by small but fixed disks $U_{z_j}$;
  enclosing the curve $C = \mathbb{T}$ by lenses meeting in these disks; solving the
   Riemann-Hilbert problems induced in these regions; finally stitching them back together.
    For Fisher-Hartwig singularities this was performed in \cite{deift}.
     Since two of our estimates must be uniform over $z \in \mathbb{T}$, we will be concerned with the formulas
      that result in the regions enclosed by $U_{z_j}$ as well as the the regions outside $U_{z_j}$ but inside the
      lenses.

Before we begin the analysis of Riemann-Hilbert problem, we list a
few useful identities and notations. Following \cite{deift},
our complex logarithm will be cut at the negative real axis unless
otherwise noted.

The measure $d\mu=\displaystyle \frac{f}{2\pi\cosh \epsilon}d\theta$
is a probability one. Therefore, one has (see \cite{sim1}, equation
(3.2.53)):
$$\mathcal{S}(\mu)\Phi_{n-1}^*(z)-\Psi_{n-1}^*(z) = z^{n-1}\int_{\mathbb{T}} \frac{e^{i\theta}+z}{e^{i\theta}-z}
\overline{\Phi_{n-1}(e^{i\theta})} d\mu$$
$$=z^{n-1}\int_{\mathbb{T}} \frac{e^{i\theta}-z+2z}{e^{i\theta}-z} \overline{\Phi_{n-1}(e^{i\theta})} d\mu =
0+2z^n\int_\mathbb{T}\frac{\overline{\Phi_{n-1}}(\xi)}{\xi-z}\frac{f(\xi)d\xi}{
i2\pi \xi \cosh\epsilon }$$ so
\begin{equation} \label{y_22psi_n}
-Y_{22}(z) =
\int_\mathbb{T}\frac{\overline{\Phi_{n-1}}(\xi)}{\xi-z}\frac{f(\xi)d\xi}{2\pi
i\xi} = \frac{1}{2z^{n}}\left(F(z)\Phi_{n-1}^*(z)-(\cosh\epsilon)
\Psi_{n-1}^*(z)\right),
\end{equation}
where $F$ is the Carath\'{e}odory function associated
 to $f(\theta)\frac{d\theta}{2\pi}$ (i.e. $F = \mathcal{S}(f/2\pi)$), and $\Psi_n$ is the
  second kind polynomial associated to $\Phi_n$.

The exact expression for $F$ is easy to compute:
\begin{equation}
\label{Herglotz} F(z) =\int_{\mathbb{T}}
\frac{e^{i\theta}+z}{e^{i\theta}-z} f(\theta) \frac{d\theta}{2\pi}$$
$$= -\frac{i(e^\epsilon-e^{-\epsilon})}{\pi}   \ln
\left(\frac{i-z}{i+z}\right)+\frac{1}{2}(e^{-\epsilon}+e^\epsilon).
\end{equation}

 We will use the following notation from   \cite{deift}:
$$\beta_1 = -i\epsilon/\pi, \quad \alpha_1 = 0, \quad z_1 = i$$
$$\beta_2 = i\epsilon/\pi, \quad \alpha_2 = 0, \quad z_2 = -i$$

Before we discuss the results obtained in \cite{deift}, we need to
introduce confluent hypergeometric function  $\psi(a, b, \zeta)$
which plays the key role in the analysis of the Riemann-Hilbert
problem. We will have to use many facts about $\psi$. For this
purpose we refer the reader to the National Institute of Standards
and Technology's Digital Library of Mathematical Functions
\cite{dlmf} and the appendix of \cite{its}.

The function $\psi(a, b, \zeta)$ is the confluent hypergeometric
function of the second kind, often written as $U(a,b,\zeta)$. It is
defined as the unique solution to Kummer's equation
$$\zeta \frac{d^2w}{d\zeta^2} + (b-\zeta)\frac{dw}{d\zeta}-aw = 0,$$
satisfying $w(a, b, \zeta) \sim \zeta^{-a}$ as $\zeta\to \infty$. We
will be interested in the following choices of the parameters: $b=1$
and $a=\{\beta_j, 1+ \beta_j\}, j=1,2$. The function $\psi$ is
analytic in $\zeta$ on the universal cover of $\mathbb{C}\backslash
0$ and can be represented by the series (formula 13.2.9 in
\cite{dlmf}):
\begin{equation}\label{series}
\psi(a, 1, \zeta) = -\frac{1}{\Gamma(a)} \sum_{k=0}^\infty
\frac{(a)_k}{k!^2}\zeta^k\left(\ln\zeta +
\frac{\Gamma'(a+k)}{\Gamma(a+k)}-\frac{2\Gamma'(k+1)}{\Gamma(k+1)}\right),
\end{equation}
where $(a)_k = \displaystyle \frac{\Gamma(a+k)}{\Gamma(a)}$ is the
Pochhammer symbol. This allows us to write $\psi$ as
\begin{equation}\label{sht}
\psi(\zeta)=g(\zeta)\ln \zeta+h(\zeta), \end{equation} where $g$ and
$h$ are entire and single-valued. In particular, we have
\begin{equation}
\label{smallzeta} \psi(a, 1, \zeta) =
-\frac{1}{\Gamma(a)}\left(\ln\zeta +
\frac{\Gamma'(a)}{\Gamma(a)}-\frac{2\Gamma'(1)}{\Gamma(1)}\right) +
O(\zeta\ln\zeta)
\end{equation}
for $\zeta: |\zeta|<1$. The precise asymptotics of $\psi$ as
$\zeta\in \mathbb{C} \to \infty, -3\pi/2< \arg \zeta < 3\pi/2 $ for
fixed $a$ is (formula (7.2) in \cite{its})
\begin{equation} \label{largezeta} \psi(a, 1, \zeta) = \zeta^{-a}[1-a^2\zeta^{-1}+O(\zeta^{-2})]
\end{equation}
This asymptotics is a consequence of the following integral
representation of $\psi$ (formula (7.3), \cite{its}):
\[
\psi(a,1,\zeta)=\frac{1}{\Gamma(a)}\int_0^{\infty e^{-i\alpha}}
t^{a-1}(1+t)^{-a}e^{-\zeta t}dt, \quad -\pi<\alpha<\pi,
\,-\pi/2+\alpha<\arg \zeta<\pi/2+\alpha.
\]
That representation, in particular, implies that
\begin{equation}\label{ss1}
\sup_{u\in i\mathbb{R},|u|>1}|\psi(\pm i\epsilon,1,u)|\to 1, \quad
\epsilon\to 0
\end{equation}
and
\begin{equation}\label{ss2}
\sup_{u\in i\mathbb{R},|u|>1}|\psi(1\pm
i\epsilon,1,u)-\psi(1,1,u)|\to 0, \quad \epsilon\to 0.
\end{equation}
The crucial property of $\psi$ which makes it indispensable for the
Riemann-Hilbert analysis is the following transformation formula
(formula (7.30) in \cite{its})
\[
\psi(a,c,e^{-2\pi i}\zeta)=e^{2\pi i a}\psi(a,c,\zeta)-\frac{2\pi
i}{\Gamma(a)\Gamma(a-c+1)}e^{i\pi a}e^\zeta
\psi(c-a,c,e^{-i\pi}\zeta).
\]
Following \cite{deift}, we will use the convention that, unless
otherwise mentioned, $\zeta$ always satisfies $ 0 \le \arg \zeta<
2\pi$.\smallskip

Concerning the logarithmic derivative of the Gamma function (the
digamma function) which appears above, we will have occasion to use
its reflection formula (equation 5.15.6, \cite{dlmf})
\begin{equation} \label{reflection}
\frac{\Gamma'(1-z)}{\Gamma(1-z)} - \frac{\Gamma'(z)}{\Gamma(z)} =
\pi \cot (\pi z).
\end{equation}\smallskip

Now, we need to discuss another function which will be important below. Consider
$$\mathcal{D}(z)= \exp\left(\frac{1}{2\pi i} \int_{\mathbb{T}} \frac{\ln f(s)}{s-z}
ds\right).$$ This function is analytic away from $\mathbb{T}$ with
a.e. boundary values satisfying $\mathcal{D}_+(z) =
\mathcal{D}_-(z)f(z), |\mathcal{D}_+(z)|^2=f(z)$ (compare with $\Pi$
defined in \eqref{facti}). Notice this definition a priori differs
from that in \cite{sim1}, section 2.4. However due to the
normalization $\int_{\mathbb{T}} \ln(f(\theta))d\theta = 0$, in fact
these two agree. We may compute $\mathcal{D}$ explicitly:
\begin{equation}
\label{Szego}
\mathcal{D}(z) = \exp\left(\frac{1}{2\pi i}
 \int_{\mathbb{T}} \frac{\ln f(s)}{s-z} ds\right) = \exp\left(\frac{\epsilon}{\pi i}
 \ln\left(\frac{i-z}{i+z}\right)\right).
\end{equation}
\bigskip

\subsection{Asymptotic formulas for solution of the Riemann-Hilbert problem}

In the next two subsections, we recall how asymptotics of $Y$ on
$\mathbb{T}$ is obtained through solving Riemann-Hilbert problem.
Then we will apply this asymptotics to prove Lemma \ref{l1}.

In \cite{deift}, the Riemann-Hilbert problem undergoes  various
transformations until it is in a form for
 which explicit solutions can be written. The singularity points $i$ and $-i$, along with the artificially introduced point $z=1$
  (though the analysis reduces to triviality here and we drop this case) are all enclosed by the small disks $U_i$, $U_{-i}$ of fixed radius $\delta > 0$.
   The remainder of the unit circle is enclosed in ``lenses" (see  Figure~1).

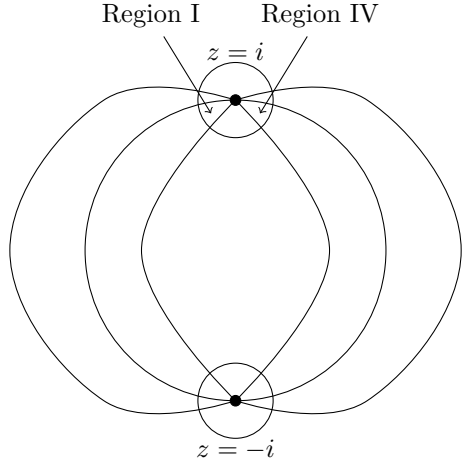
\begin{figure}
\centering

\begin{tikzpicture}

\draw  (0,0) node (v3) [anchor = north]{} ellipse (2 and 2);
\draw  (0,2) node (v1) {} ellipse (.5 and .5); \draw  (0,-2) node
(v2) {} ellipse (.5 and .5);

\draw  plot[smooth, tension=.7] coordinates {(0,2) (-1.25,0) (0,-2)
}; \draw  plot[smooth, tension=.7] coordinates {(0,2) (1.25,0)
(0,-2) }; \draw  plot[smooth, tension=.7] coordinates {(0,2)
(1.75,2) (3,0) (1.75,-2) (0,-2)}; \draw  plot[smooth, tension=.7]
coordinates {(0,2) (-1.75,2) (-3,0) (-1.75,-2) (0,-2)};
\filldraw[black] (0,2) circle (2pt);
\filldraw[black]  (0,-2) circle (2pt);

\node (va) at (0, -2.65) {$z=-i$};
\node (vb) at (0, 2.65) {$z=i$};

\node (v4) at (0,3.75) {};

\node (v5) at (3.75,0) {};

\node (v6) at (-1.125,3.125) {Region I}; \node (v7) at (-.25,1.7)
{}; \node (v8) at (1.125,3.125) {Region IV}; \node (v9) at (.25,1.7)
{}; \draw  [->](v6) edge (v7); \draw  [->](v8) edge (v9);
\end{tikzpicture}
\caption{Setup of Riemann-Hilbert problem on $\mathbb{T}$}
\end{figure}

We trace through the various transformations of the RH problem for
$z$ away from the points of singularity $i$ and $-i$. These
reductions are:

\begin{equation}\label{trans}
Y \to T \to S \to R\,.
\end{equation}
 We will explain each of these transformations
below. However, our analysis will be limited to the case when
$\epsilon\in (0,\epsilon_0]$,$n>n_0(\epsilon)$,\,$|z|\leq 1$ and $z$
 belongs to one of the lenses. This choice is motivated by our goal
 to control $Y$ only on the unit circle $\mathbb{T}$ itself so we will only need to take $|z|=1$ later on. In these domains, some of the transformations in \eqref{trans} are
trivial, e.g., $T=Y$ (formula (4.1) in \cite{deift}).

Then, (formula (4.3), \cite{deift}), $S$ is related to $T$ by
$$S(z) = T(z)\left( \begin{array}{cc}
1 & 0 \\
-f(z)^{-1}z^n & 1 \\\end{array} \right)\,. $$ Now, the original
Riemann-Hilbert problem for $Y$ can be written in terms of $S$ and
its solution proceeds by first choosing various parametrices
(approximate solutions) in each of the domains. The parametrices
outside of $\cup_j U_j$ and inside of each $U_j$ will be denoted by
$N$ and $P_{z_j}$, respectively. Our final transformation is to $R$,
which satisfies the  Riemann-Hilbert problem of very special form.
In fact, the correct choices of parametrices $N$ and $P_{z_j}$ makes
it possible to say that each jump in the Riemann-Hilbert problem for
$R$ is of the form $I+O(n^{-1})$ when $n\to\infty$ on each of the
contours involved (see (4.57)--(4.59)) and an asymptotics of $R$ at
infinity is $I+O(1/z)$. Then, the standard argument (see, e.g.,
\cite{dv1}) implies that
\begin{equation}R=I+O(n^{-1})\label{uni-r}\end{equation} uniformly
over $z\in \mathbb{C}$. It is clear now that the main asymptotics of
$Y$ is captured by $N$ and $P_{z_j}$. Below we will discuss these
parametrices in detail.

\subsubsection{Case 1. Parametrix $N$, $z$ outside of $U_{z_j}$}

For $z:|z|<1$, we write (formula (4.7), \cite{deift})
$$N(z) = \left( \begin{array}{cc}
\mathcal{D}(z) & 0 \\
0 & \mathcal{D}(z)^{-1} \\\end{array} \right)\left( \begin{array}{cc}
0 & 1 \\
-1 & 0 \\\end{array} \right)$$
$$R(z) = S(z)N^{-1}(z)$$
and $$R(z) = I+O\left(\frac{1}{n}\right)$$  by equations (4.61) and
(4.65-71) in \cite{deift}. Since we are away from the singularities
of the weight, all terms are uniformly bounded. Collecting
$O\left(\frac{1}{n}\right)$ errors, we have
$$S(z) = N(z) + O\left(\frac{1}{n}\right).$$
Reversing these transformations
$$Y(z) = T(z) = S(z) \left( \begin{array}{cc}
1 & 0 \\
-f(z)^{-1}z^n & 1 \\\end{array} \right)^{-1} = \left(N(z) + O\left(\frac{1}{n}\right)\right)\left( \begin{array}{cc}
1 & 0 \\
f(z)^{-1}z^n & 1 \\\end{array} \right) $$
$$= \left(\left( \begin{array}{cc}
\mathcal{D}(z) & 0 \\
 0 & \mathcal{D}(z)^{-1} \\\end{array} \right)\left( \begin{array}{cc}
 0 & 1 \\
-1 & 0 \\\end{array} \right) + O\left(\frac{1}{n}\right)\right)\left( \begin{array}{cc}
1 & 0 \\
f(z)^{-1}z^n & 1 \\\end{array} \right)\,.$$ So,
\begin{equation} \label{outsieUzj} Y(z) = \left( \begin{array}{cc}
z^nf^{-1}\mathcal{D}(z) & \mathcal{D}(z) \\
-\mathcal{D}(z)^{-1} & 0 \\\end{array} \right) +
O\left(\frac{1}{n}\right)\,,
\end{equation}
when $|z|<1$ and $z\notin U_{z_j}$. Now, to get asymptotical
behavior of $Y(z)$ (and thus of $\Phi_n^*(z)$ and $\Psi_n^*(z)$ by
formula \eqref{y_22psi_n}) on $\mathbb{T}$ but outside $U_{z_j}$, we
need to take $|z|\to 1$ in \eqref{outsieUzj} and this gives
\begin{equation} \label{outsieUzj1} Y(z) = \left( \begin{array}{cc}
z^nf^{-1}\mathcal{D}_+(z) & \mathcal{D}_+(z) \\
-\mathcal{D}_+(z)^{-1} & 0 \\\end{array} \right) +
O\left(\frac{1}{n}\right)
\end{equation}
uniformly over $z\in \mathbb{T}, z\notin U_{z_j}$.

\subsubsection{Case 2: Parametrix $P_{z_j}$, $z\in U_{z_j}$. }

The nature of the singularities of the weight $f$ at points $z_1$
and $z_2$ is the same so we will discuss only asymptotics in
$U_{z_1}$ in  detail. By formula (4.23) in \cite{deift}, we write
$$P_{z_j}(z) = E(z)\Psi^{(j)}(z)\left( \begin{array}{cc}
z^{n/2} & 0 \\
0 & z^{-n/2} \\\end{array} \right),$$ where  $E(z)$ is chosen so
that $P_{z_j}$ and $N$ approximately match across $\partial
U_{z_j}$. For example, we choose region~I on  Figure 1 (we can also
take the similar domain around $z_2$). This corresponds to $|z|<1,
0<\arg\frac{z}{z_j}<\pi/2$. By introducing $\zeta = n\ln
\frac{z}{z_j}$ we get $\arg \zeta\in (\pi/2,\pi)$ and the choice of
$E$ and $\Psi^{(j)}$ are made by (formula (4.32), \cite{deift})

$$\Psi_j^{(I)}(\zeta) = \left( \begin{array}{cc}
\psi(\beta_j, 1, \zeta)e^{2i\pi \beta_j}\left(\frac{z}{z_j}\right)^{-n/2} & -e^{i\pi \beta_j}\psi(1-\beta_j, 1, e^{-i\pi}\zeta)\left(\frac{z}{z_j}
\right)^{n/2}\frac{\Gamma(1-\beta_j)}{\Gamma(\beta_j)} \\
-e^{i\pi\beta_j}\psi(1+\beta_j, 1, \zeta)\left(\frac{z}{z_j}\right)^{-n/2}\frac{\Gamma(1+\beta_j)}{\Gamma(-\beta_j)} & \psi(-\beta_j, 1, e^{-i\pi}
\zeta)\left(\frac{z}{z_j}\right)^{n/2} \\\end{array} \right)$$
$$\coloneqq \left( \begin{array}{cc}
e^{2i\pi  \beta_j}\left(\frac{z}{z_j}\right)^{-n/2}\psi_1^{(j)} & -e^{i\pi  \beta_j}\left(\frac{z}{z_j}\right)^{n/2}c_2^{(j)}\psi_3^{(j)} \\
-e^{i\pi
\beta_j}\left(\frac{z}{z_j}\right)^{-n/2}c_1^{(j)}\psi_2^{(j)} &
\left(\frac{z}{z_j}\right)^{n/2}\psi_4^{(j)} \\\end{array} \right),
\quad ({\rm this \,\, defines\quad}
\psi^{(j)}_{1,2,3,4},c^{(j)}_{1,2})$$ and (formula (4.47))
$$E(z) = N(z)\left( \begin{array}{cc}
\zeta^{\beta_j} & 0 \\
0 & \zeta^{-\beta_j} \\\end{array} \right)\left( \begin{array}{cc}
z_j^{-n/2} & 0 \\
0 & z_j^{n/2} \\\end{array} \right)\left( \begin{array}{cc}
e^{-2\pi i \beta_j} & 0 \\
0 & e^{\pi i \beta_j} \\\end{array} \right)\,.$$ Multiplying
matrices, we find
$$P_{z_j}(z) = \left( \begin{array}{cc}
0 & \mathcal{D}(z) \\
-\mathcal{D}(z)^{-1} & 0 \\\end{array} \right) \left( \begin{array}{cc}
\zeta^{\beta_j}  \psi_1^{(j)} & -e^{-\pi i \beta_j}c_2^{(j)}\zeta^{\beta_j}z_j^{-n} \psi_3^{(j)} \\
-e^{2\pi i \beta_j}c_1^{(j)}z_j^{n}\zeta^{-\beta_j}\psi_2^{(j)} &
e^{\pi i \beta_j}\zeta^{-\beta_j}\psi_4^{(j)}
\\\end{array}\right)\,.$$
Restricting our attention to $j=1$, and using the notation
$Y^{(1, I)}$ to identify this as the solution $Y$ in region $I$
around point $z_1=i$:
$$Y^{(1, I)} = T = RP_{z_1}\left( \begin{array}{cc}
1 & 0 \\
f(z)^{-1}z^n & 1 \\\end{array} \right)\,.$$ We have again
   $$R(z) = S(z)P_{z_1}^{-1}(z)$$
   and
   $$R(z) = I+O\left(\frac{1}{n}\right)$$
   as follows from \eqref{uni-r}.  However, since we have  singularities in our expressions, we will leave $R$ in this form for now.
   This yields
\begin{equation}\label{abc}
Y^{(1, I)}(z) =\left(I+ O\left(\frac{1}{n}\right)\right) P_{z_1}(z)\left( \begin{array}{cc}
1 & 0 \\
f^{-1}z^n & 1 \\\end{array} \right)\end{equation}  $$= \left(I+
O\left(\frac{1}{n}\right)\right)\left( \begin{array}{cc}
0 & \mathcal{D}(z) \\
-\mathcal{D}(z)^{-1} & 0 \\\end{array} \right) \left( \begin{array}{cc}
\zeta^{-i\epsilon/\pi}  \psi_1^{(1)} & -e^{-\epsilon}c_2^{(1)}\zeta^{-i\epsilon/\pi}i^{-n} \psi_3^{(1)} \\
-e^{2\epsilon}c_1^{(1)}i^{n}\zeta^{i\epsilon/\pi}\psi_2^{(1)} & e^\epsilon\zeta^{i\epsilon/\pi}\psi_4^{(1)} \\\end{array}\right)\left( \begin{array}{cc}
1 & 0 \\
f^{-1}z^n & 1 \\\end{array} \right)$$
$$=\left(I+ O\left(\frac{1}{n}\right)\right)\left( \begin{array}{cc}
0 & \mathcal{D}(z) \\
-\mathcal{D}(z)^{-1} & 0 \\\end{array} \right) \left( \begin{array}{cc}
\zeta^{-i\epsilon/\pi}  \psi_1^{(1)} -e^{-\epsilon}f^{-1}z^nc_2^{(1)}\zeta^{-i\epsilon/\pi}i^{-n} \psi_3^{(1)}& -e^{-\epsilon}c_2^{(1)}\zeta^{-i\epsilon/\pi}i^{-n} \psi_3^{(1)} \\
-e^{2\epsilon}c_1^{(1)}i^{n}\zeta^{i\epsilon/\pi}\psi_2^{(1)} +e^\epsilon f^{-1}z^n\zeta^{i\epsilon/\pi}\psi_4^{(1)} & e^{\epsilon}\zeta^{i\epsilon/\pi}\psi_4^{(1)} \\\end{array}\right)$$
$$ = \left(I+ O\left(\frac{1}{n}\right)\right)\left( \begin{array}{cc}
\mathcal{D}(z)(-e^{2\epsilon}c_1^{(1)}i^{n}\zeta^{i\epsilon/\pi}\psi_2^{(1)} +e^{\epsilon}f^{-1}z^n\zeta^{i\epsilon/\pi}\psi_4^{(1)})& \mathcal{D}(z) e^\epsilon\zeta^{i\epsilon/\pi}\psi_4^{(1)} \\-\mathcal{D}(z)^{-1}(\zeta^{-i\epsilon/\pi}  \psi_1^{(1)} -e^{-\epsilon}f^{-1}z^nc_2^{(1)}\zeta^{-i\epsilon/\pi}i^{-n} \psi_3^{(1)})
 & \mathcal{D}(z)^{-1}e^{-\epsilon}c_2^{(1)}\zeta^{-i\epsilon/\pi}i^{-n} \psi_3^{(1)}
 \\\end{array}\right)\,.
$$
In $U_{z_2}$, the calculations are exactly the same, but with $z_1 =
i$ replaced by $z_2 = -i$ and $\beta_1$ replaced by $\beta_2 =
-\beta_1$.
 Therefore, we have
 \begin{eqnarray*}  Y^{(2, I)}(z)=\hspace{12cm} \\\left(I+ O\left(\frac{1}{n}\right)\right)\left( \begin{array}{cc}
 \mathcal{D}(z)(-e^{-2\epsilon}c_1^{(2)}(-i)^{n}\zeta^{-i\epsilon/\pi}\psi_2^{(2)}
 +e^{-\epsilon}f^{-1}z^n\zeta^{i\epsilon/\pi}\psi_4^{(2)})& \mathcal{D}(z)
 e^{-\epsilon}\zeta^{-i\epsilon/\pi}\psi_4^{(2)} \\-\mathcal{D}(z)^{-1}(\zeta^{i\epsilon/\pi}
  \psi_1^{(2)} -e^\epsilon f^{-1}z^nc_2^{(2)}\zeta^{i\epsilon/\pi}(-i)^{-n} \psi_3^{(2)})
  & \mathcal{D}(z)^{-1}e^\epsilon c_2^{(2)}\zeta^{i\epsilon/\pi}(-i)^{-n} \psi_3^{(2)}
  \\\end{array}\right).
  \end{eqnarray*}
Recalling that
 \[ f(z)= \left\{
       \begin{array}{lr}
         e^\epsilon, &  -\pi/2\leq\arg z < \pi/2\\
         e^{-\epsilon}, &  \text{otherwise}
       \end{array}
       \right.,
 \]
we see that $f = e^{-\epsilon}$ in $U_{z_1}$, region I,  and  $f =
e^\epsilon$  in region I of $U_{z_2}$. Thus, we get
$$Y^{(1, I)}(z) = \left(I+ O\left(\frac{1}{n}\right)\right)\left( \begin{array}{cc}
e^{2\epsilon}\mathcal{D}(z)(-c_1^{(1)}\zeta^{i\epsilon/\pi}i^n\psi_2^{(1)} +z^n\zeta^{i\epsilon/\pi}\psi_4^{(1)})& \mathcal{D}(z) e^\epsilon\zeta^{i\epsilon/\pi}\psi_4^{(1)} \\-\mathcal{D}(z)^{-1}(\zeta^{-i\epsilon/\pi}  \psi_1^{(1)}- z^nc_2^{(1)}\zeta^{-i\epsilon/\pi}i^{-n} \psi_3^{(1)})
 & \mathcal{D}(z)^{-1}e^{-\epsilon}c_2^{(1)}\zeta^{-i\epsilon/\pi}i^{-n} \psi_3^{(1)} \\\end{array}\right) $$
 and
  $$ Y^{(2, I)}(z)  =\left(I+ O\left(\frac{1}{n}\right)\right)\left( \begin{array}{cc}
  e^{-2\epsilon}\mathcal{D}(z)(-c_1^{(2)}\zeta^{-i\epsilon/\pi}(-i)^n\psi_2^{(2)} +z^n\zeta^{-i\epsilon/\pi}\psi_4^{(2)})& \mathcal{D}(z) e^{-\epsilon}\zeta^{-i\epsilon/\pi}\psi_4^{(2)} \\-\mathcal{D}(z)^{-1}(\zeta^{i\epsilon/\pi}  \psi_1^{(2)} -z^nc_2^{(2)}\zeta^{i\epsilon/\pi}(-i)^{-n} \psi_3^{(2)})
   & \mathcal{D}(z)^{-1}e^\epsilon c_2^{(2)}\zeta^{i\epsilon/\pi}(-i)^{-n} \psi_3^{(2)} \\\end{array}\right)\,.$$
Because of the singularities involved, we must take care in
performing this last multiplication.
   Denote by $\widetilde{Y}_{z_j}$ the right-hand matrix in the above equation. That is
   \begin{equation}
   \label{insideUz1}
\widetilde{Y}_i^{(I)}(z) \coloneqq \left( \begin{array}{cc}
e^{2\epsilon}\mathcal{D}(z)(-c_1^{(1)}\zeta^{i\epsilon/\pi}i^n\psi_2^{(1)} +z^n\zeta^{i\epsilon/\pi}\psi_4^{(1)})& \mathcal{D}(z) e^\epsilon\zeta^{i\epsilon/\pi}\psi_4^{(1)} \\-\mathcal{D}(z)^{-1}(\zeta^{-i\epsilon/\pi}  \psi_1^{(1)}- z^nc_2^{(1)}\zeta^{-i\epsilon/\pi}i^{-n} \psi_3^{(1)})
 & \mathcal{D}(z)^{-1}e^{-\epsilon}c_2^{(1)}\zeta^{-i\epsilon/\pi}i^{-n} \psi_3^{(1)}
 \\\end{array}\right)\,,
   \end{equation}
   \begin{equation}
   \label{insideUz2}
\widetilde{Y}_{-i}^{(I)}(z)\coloneqq  \left( \begin{array}{cc}
  e^{-2\epsilon}\mathcal{D}(z)(-c_1^{(2)}\zeta^{-i\epsilon/\pi}(-i)^n\psi_2^{(2)} +z^n\zeta^{-i\epsilon/\pi}\psi_4^{(2)})& \mathcal{D}(z) e^{-\epsilon}\zeta^{-i\epsilon/\pi}\psi_4^{(2)} \\-\mathcal{D}(z)^{-1}(\zeta^{i\epsilon/\pi}  \psi_1^{(2)} -z^nc_2^{(2)}\zeta^{i\epsilon/\pi}(-i)^{-n} \psi_3^{(2)})
   & \mathcal{D}(z)^{-1}e^\epsilon c_2^{(2)}\zeta^{i\epsilon/\pi}(-i)^{-n} \psi_3^{(2)}
   \\\end{array}\right)\,.
   \end{equation}

\vspace{0.5cm}

We start with proving Lemma \ref{l4}.

\begin{proof}{\it (of Lemma \ref{l4}).}
We only need to use formula (1.23) from \cite{deift}. This equation
shows, in the notations introduced above,
\begin{equation}\label{anek1}
-\overline{\gamma}_{k-1} = \Phi_{k}(0)
 =  k^{-2\beta_1-1}z_1^k 2^{2\beta_2} \frac{\Gamma(1+\beta_1)}{\Gamma(-\beta_1)}+ k^{-2\beta_2-1}z_2^k 2^{2\beta_1}
 \frac{\Gamma(1+\beta_2)}{\Gamma(-\beta_1)} +
 r_{k,\epsilon}\end{equation}
and
\begin{equation}\label{anek2}
|r_{k,\epsilon}|<C_\epsilon (k+1)^{-2}.
\end{equation}
\end{proof}

{\bf Remark.}  The estimates \eqref{anek1}, \eqref{anek2}, and
\eqref{sum-rule} imply that the recursion coefficients for the
weight $f$ satisfy
\begin{equation}\label{nn}
\|\{\gamma_k\}\|_{\ell^2}\lesssim \sqrt\epsilon, \quad
|\gamma_k|<_\epsilon (k+1)^{-1}.
\end{equation}
Recall that  $\Psi_n$ and $\Psi_n^*$ satisfy recursion
\begin{equation}\label{recur}
\left\{\begin{array}{cc} \Psi_{n+1} =&
z\Psi_n+\overline{\gamma}_n\Psi_n^*\\
\Psi_{n+1}^*=& \Psi_{n}^* + {\gamma}_n z\Psi_n
\end{array}\right.
\end{equation}
and we have
\[
|\Psi_{n+1}|\leq |\Psi_{n}|(1+|\gamma_{n}|), \quad |\Psi_0|=1
\]
Iterating this formula and using \eqref{nn} we get a rough upper
bound
\begin{equation}\label{rough}
\|\Psi_n\|_{L^\infty(\mathbb{T})}<_\epsilon n^{C_\epsilon}.
\end{equation}
This estimate can be substantially improved by Riemann-Hilbert
analysis but \eqref{rough}  will be good enough for our
purposes.\smallskip

Now we are ready to verify Lemma \ref{l1}.

\begin{proof}{\it (of Lemma \ref{l1}).}

\subsection{$|\Phi_n^*(z)| \sim 1, z \in \mathbb{T}$ for $\epsilon\in (0,\epsilon_0]$ and $n>n_0(\epsilon)$}\quad\\

We consider two cases.

\subsubsection{$z$ outside $U_{z_j}$}

By equation \eqref{outsieUzj}, $\Phi_n^*(z) \to
\mathcal{D}_+(z)^{-1}$ uniformly in this region. Since
$|\mathcal{D}_+| = f^{1/2}$ a.e. on $\mathbb{T}$, this trivially
implies our desired estimate for $z$ outside of $U_{\pm i}$.

\subsubsection{$z$ inside $U_{z_j}$}

We consider the boundary values as $|z|\to 1$ in the asymptotics for
$Y$. Notice that, since $|\mathcal{D}_+|^2 = f$, we have
$|\mathcal{D}_+| = e^{-\epsilon/2}$  in region I around $z=i$. We
will focus on region I where $\zeta=iu, u>0$. In the other regions,
analysis is the same. Recall that $\zeta = n \ln \frac{z}{i}$. So,
if $z=e^{i(\pi/2+\tau)}, \tau>0$, we have
$$\zeta^{-i\epsilon/\pi} = \left(n\ln \frac{z}{i}\right)^{-i\epsilon/\pi} = (ni\tau)^{-i\epsilon/\pi} =
 e^{\epsilon/2}e^{-\frac{i\epsilon}{\pi}\ln (n\tau)}.$$
Therefore, in region I, in which $\arg\frac{z}{i}> 0$, one has
\begin{equation}\label{eql}\left|\frac{1}{\mathcal{D}(z)\zeta^{i\epsilon/\pi}}\right| =
e^{\epsilon}.\end{equation} Similarly, in region IV, the other side
of $i$, $|\mathcal{D}(z)| = e^{\epsilon/2}$ and
 $|\zeta^{-i\epsilon/\pi}| =|e^{-i\epsilon \pi^{-1}\ln \zeta}|= e^{3\epsilon/2}$
 since $\arg\zeta=3\pi/2$). We again obtain $\displaystyle \left|\frac{1}{\mathcal{D}(z)\zeta^{i\epsilon/\pi}}\right| = e^{\epsilon}$.

Consider the expressions involving the $\psi$ in the first column of
$\widetilde{Y}_i^{(I)}(z)$.
 We focus on $\widetilde{Y}_{i, 21}^{(I)}$, the bottom left corner of the $\widetilde{Y}$ matrix.
Due to \eqref{eql} and definition of $\zeta$,
$$ \left|\widetilde{Y}_{i, 21}^{(I)}(z) \right| =e^{\epsilon} \left|\psi_1^{(1)}- \left(\frac{z}{i}\right)^n c_2^{(1)} \psi_3^{(1)}\right| =
e^\epsilon \left|\psi\left(-\frac{i\epsilon}{\pi} ,1, \zeta\right)-
e^\zeta\frac{\Gamma(1+\frac{i\epsilon}{\pi})}{\Gamma(-\frac{i\epsilon}{\pi})}\psi\left(1+\frac{i\epsilon}{\pi},
1, e^{-i\pi} \zeta\right)\right|.$$ Consider
\[
\Omega(\zeta,\epsilon)=\psi\left(-\frac{i\epsilon}{\pi} ,1,
\zeta\right)-
e^\zeta\frac{\Gamma(1+\frac{i\epsilon}{\pi})}{\Gamma(-\frac{i\epsilon}{\pi})}\psi\left(1+\frac{i\epsilon}{\pi},
1, e^{-i\pi} \zeta\right).
\]
It is the analysis of this function which concerns us. We want to
show that
 \[
 \max_{\zeta=iu, u\in \mathbb{R}}||\Omega(\zeta,\epsilon)|-1|\to 0, \quad
 \epsilon\to 0.
 \]
We do this in two steps.  The estimates
\eqref{largezeta},\eqref{ss1}, and \eqref{ss2} imply that
 \[
 \max_{\zeta=iu, |u|>1}||\Omega(\zeta,\epsilon)|-1|\to 0, \quad
 \epsilon\to 0.
 \]
For $|\zeta|<1$, we will use
 series \eqref{series} for $\psi$. We want to show
 \[
 \max_{\zeta=iu,|u|<1}||\Omega(\zeta,\epsilon)|-1|\to 0, \quad
 \epsilon\to 0.
 \]
Recall that $\Gamma(\zeta)$ has a pole at $0$ so $\lim_{\epsilon\to
0}\Gamma^{-1}(\pm i\epsilon)=0$. From \eqref{sht}, we get
\[
 \Omega(\zeta,\epsilon)=(\ln \zeta)
g(-i\epsilon/\pi,\zeta)+h(-i\epsilon/\pi,\zeta)-e^\zeta
\frac{\Gamma(1+i\epsilon/\pi)}{\Gamma(-i\epsilon/\pi)}\Bigl((\ln(e^{-i\pi}\zeta))g(1+i\epsilon/\pi,
e^{-i\pi}\zeta)+h(1+i\epsilon/\pi,e^{-i\pi}\zeta)\Bigr),
\]
where
\begin{equation}\label{ser1}
g(a,\zeta)=-\frac{1}{\Gamma(a)} \sum_{k=0}^\infty
\frac{(a)_k}{k!^2}\zeta^k,
\end{equation}
\begin{equation}\label{ser2}
h(a,\zeta)=-\frac{1}{\Gamma(a)} \sum_{k=0}^\infty
\frac{(a)_k}{k!^2}\zeta^k\left(
\frac{\Gamma'(a+k)}{\Gamma(a+k)}-\frac{2\Gamma'(k+1)}{\Gamma(k+1)}\right).
\end{equation}
 These expansions converge uniformly in $\zeta:
|\zeta|<1$ and the coefficients depend on $\epsilon$ explicitly. In
$\Omega$, the logarithmic singularities cancel each other as follows
(recall that $\zeta=iu, u>0$)
\[
-\frac{\ln
\zeta}{\Gamma(-i\epsilon/\pi)}+\frac{\ln(e^{-i\pi}\zeta)}{\Gamma(-i\epsilon/\pi)}=\frac{-i\pi}{\Gamma(-i\epsilon/\pi)}\to
0, \,\epsilon\to 0,
\]
where we accounted for the first terms in the series \eqref{ser1}
and \eqref{ser2} only since for the other terms we can use
\[
|\zeta \ln \zeta|\lesssim 1, \quad |\zeta|<1.
\]
Therefore, the required asymptotics of $\Omega$ will follow from
\[
-\lim_{\epsilon\to
0}\frac{\Gamma'(-i\epsilon/\pi)}{\Gamma^2(-i\epsilon/\pi)}=1.
\]
Now, recall that \eqref{rhp},\eqref{insideUz2},\eqref{abc} yield
\[
-\Phi_{n-1}^*(z)=O(n^{-1})\widetilde{Y}_{i,
11}^{(I)}(z)+(1+O(n^{-1})\widetilde{Y}_{i, 21}^{(I)}(z).
\]
The analysis to show that $\widetilde{Y}_{i, 11}^{(I)}(z) = O(1)$ is
nearly identical to that showing $|\widetilde{Y}_{i, 21}^{(I)}(z)|
\sim 1$ except that it may be performed with less care, since only
an upper bound is needed. The estimates we obtained prove
\eqref{uno} in Lemma \ref{l1}.\smallskip

\subsection{$\| \Phi_n^* \Psi_n^* + z \Phi_n \Psi_n\|_{L^\infty(\mathbb{T})}>_{\epsilon} \ln n$}

We will investigate  $\Phi_n^* \Psi_n^* + z \Phi_n \Psi_n$ for
$z=e^{i\theta}, \theta\in (\pi/2+n^{-0.5},\pi/2+2n^{-0.5})$.  Since
$\displaystyle \zeta = n\ln \frac{z}{i}=iu, u\sim \sqrt{n}$, this
puts us in the $\zeta \to \infty$ regime when using the parametrix
$P_{z_1}$. This also allows us to easily perform the final
multiplication in \eqref{abc}, since all elements in the
$\widetilde{Y}$ matrix are $O(1)$ when $|\zeta|>1$. Recall equation
\eqref{y_22psi_n}:
\begin{equation}\label{lala1}
2z^n Y_{22}(z) =(\cosh \epsilon)
\Psi_{n-1}^*(z)-F(z)\Phi_{n-1}^*(z).
\end{equation}
Performing the multiplication and noting the error,
\eqref{insideUz1} gives
$$Y_{22}(z) = \mathcal{D}(z)^{-1}e^{-\epsilon}c_2^{(1)}\zeta^{-i\epsilon/\pi}i^{-n} \psi_3^{(1)}
 + O\left(\frac{1}{n}\right).$$ By  \eqref{largezeta},
  $\mathcal{D}(z)^{-1}e^{-\epsilon}c_2^{(1)}\zeta^{-i\epsilon/\pi}i^{-n} \psi_3^{(1)}=O(n^{-1/2})$ and $Y_{22}(z)=O(n^{-1/2})$. Similarly,
   by equation \eqref{insideUz1},
\begin{equation}\label{largezetaphi}\Phi_{n}^* = \mathcal{D}^{-1}+ O\left(\frac{1}{\sqrt{n}}\right)\end{equation} for $\zeta=iu, u\sim \sqrt n$.
Therefore, we finally have
\begin{equation}\label{largezetapsi}\left|\Psi_{n}^*(z)\right|\sim \epsilon \ln n
\end{equation}
due to (26). Recall that $z^n \overline{\Phi_n^*} = \Phi_n$. So, we
can write
$$\Phi_n^*\Psi_n^* + z\Phi_n \Psi_n =  \Phi_n^*\Psi_n^* + z^{2n+1}\overline{\Phi_n^*\Psi_n^*} =
 \Phi_n^*\Psi_n^*\left(1+z^{2n+1}\frac{\overline{\Phi_n^*\Psi_n^*}}{\Phi_n^*\Psi_n^*}\right).$$
Inserting \eqref{lala1},\eqref{largezetaphi}, and using
$|\mathcal{D}|\sim 1$, we have
\begin{equation}\label{ll1}
\frac{\overline{\Phi_n^*\Psi_n^*}}{\Phi_n^* \Psi_n^*} =
\frac{\overline{F(z)(\mathcal{D}(z)^{-2}+
O(1/\sqrt{n}))}}{F(z)(\mathcal{D}(z)^{-2}+
O(1/\sqrt{n}))}=\frac{\overline F}{F}\cdot
\left(\frac{\overline{\mathcal{D}}}{\mathcal{D}}\right)^{-2}(1+O(n^{-0.5})).
\end{equation}
From \eqref{Herglotz} and \eqref{Szego} we can compute
\[
\frac{\overline F}{F}=-1+o(1), \quad n\to\infty
\]
and
\[
\frac{\overline{\mathcal{D}}}{\mathcal{D}}=\exp\left(\frac{2i\epsilon}{\pi}
\ln(\theta/2-\pi/4)\right)(1+o(1)), \quad n\to\infty
\]
in our range of $z=e^{i\theta}$. Therefore, substitution into
\eqref{ll1} shows that there is some $\theta_0:\theta_0\in
(\pi/2+n^{-0.5},\pi/2+2n^{-0.5})$ for which
  $$\left|1+z_0^{2n+1}\frac{\overline{\Phi_n^*(z_0)\Psi_n^*(z_0)}}{\Phi_n^*(z_0)\Psi_n^*(z_0)}\right|\sim 1$$
where $z_0=e^{i\theta_0}$.

Since, by \eqref{largezetapsi}, we have $|\Psi_n^*(z_0)| = O(\ln
n)$, and  $|\Phi_n^*|\sim 1$, we get
$$\| \Phi_n^* \Psi_n^* + z \Phi_n \Psi_n\|_{L^\infty(\mathbb{T})}>_{\epsilon} \ln n$$
as desired.\smallskip

\subsection{$\displaystyle \frac{\Psi_n^*(z)}{\Phi_n^*(z)} + \frac{\Psi_n^*(-z)}{\Phi_n^*(-z)} = O(1),z\in \mathbb{T}, n>n_0(\epsilon)$}

Outside of $U_{z_{1(2)}}$ this statement is trivial by
\eqref{outsieUzj}, so we only consider $z$ inside $U_{z_{1(2)}}$.
Further, since the calculations are exactly similar in $U_{i}$ and
$U_{-i}$, we let $z \in U_i$.

Before we proceed with the analysis of Riemann-Hilbert problem, let
us make two remarks. Firstly, since $|\Phi_n^*|\sim 1,
n>n_0(\epsilon)$, we only need to show that $U_{2n}$, defined by
$U_{2n}(z)=\Psi_n^*(z)\Phi_n^*(z)+\Psi_n^*(-z)\Phi_n^*(-z)$,
satisfies
\[
\|U_{2n}\|_{L^\infty(\mathbb{T})}\lesssim 1,\quad
\]
Secondly, $U_{2n}$ is a polynomial of degree at most $2n$ and
\[
\|U_{2n}\|_{L^\infty(\mathbb{T})}<_\epsilon n^{C_\epsilon}
\]
by \eqref{rough}.  The Bernstein inequality gives us
\[
\|U_{2n}'\|_{L^\infty(\mathbb{T})}<_\epsilon n^{1+C_\epsilon}
\]
Thus, to prove $\|U_{2n}\|_{L^\infty(\mathbb{T})}=O(1)$, we only
need
\begin{equation}\label{rough2}
|U_{2n}(e^{i\theta})|\lesssim 1
\end{equation}
for $\theta: \theta\in (\pi/2+e^{-\sqrt n}, \pi/2+\delta_1)$ and the
parameter $\delta_1$ here is of the same size as the radius of
$U_{i}$.

 Recall that $Y^{(1(2),I)}$ denotes the $Y$ matrix in the region
$I$ that corresponds to point $z_{1(2)}$, respectively. By
\eqref{y_22psi_n},
$$2z^n\frac{Y_{22}^{(1, I)}(z)}{Y_{21}^{(1, I)}(z)}+2(-z)^n\frac{Y_{22}^{(2, I)}(-z)}{Y_{21}^{(2, I)}(-z)}
= \Bigl(F(z)+F(-z)\Bigr)-(\cosh
\epsilon)\left(\frac{\Psi_{n-1}^*(z)}{\Phi_{n-1}^*(z)}+
\frac{\Psi_{n-1}^*(-z)}{\Phi_{n-1}^*(-z)}\right),$$ so we want to
show
$$\Bigl(F(z)+F(-z)\Bigr)-2\left(z^n\frac{Y_{22}^{(1, I)}(z)}{Y_{21}^{(1, I)}(z)}+(-z)^n\frac{Y_{22}^{(2, I)}(-z)}{Y_{21}^{(2, I)}(-z)}\right) = O(1)$$
uniformly on $\mathbb{T}$ provided that $n$ is large enough.

The formula \eqref{Herglotz} implies
$$F(z) + F(-z) =-\frac{i(e^\epsilon-e^{-\epsilon})}{\pi}\left(\ln\left(\frac{i-z}{i+z}\right)+
\ln\left(\frac{i+z}{i-z}\right)\right)+e^{-\epsilon}+e^\epsilon =
e^\epsilon+e^{-\epsilon}.$$ Due to this cancelation, we may focus on
$$ z^n\frac{Y_{22}^{(1)}(z)}{Y_{21}^{(1)}(z)} +(-z)^n
\frac{Y_{22}^{(2)}(-z)}{Y_{21}^{(2)}(-z)}$$ where $z=e^{i\theta},
\,\theta\in (\pi/2,\pi/2+\delta_1)$. In fact, since we know that
$|\Phi_n^*| \sim 1$ uniformly on $\mathbb{T}$, we may multiply out
the denominators and only examine
$$z^nY_{22}^{(1)}(z)Y_{21}^{(2)}(-z) + (-z)^nY_{22}^{(2)}(-z)Y_{21}^{(1)}(z).$$
We must take care in performing the final multiplication in the
Riemann-Hilbert problem. We have previously seen that $Y_{21}(z) =
\widetilde{Y}_{21}(z) + O\left(\frac{1}{n}\right) \sim O(1)$
uniformly $z \in \mathbb{T}$. Therefore,
\begin{equation}
\label{Y_eqn_p3}
z^nY_{22}^{(1)}(z)Y_{21}^{(2)}(-z) + (-z)^nY_{22}^{(2)}(-z)Y_{21}^{(1)}(z) =
\end{equation}
\[
 z^n\widetilde{Y}_{22}^{(1)}(z)\widetilde{Y}_{21}^{(2)}(-z) +
 (-z)^n\widetilde{Y}_{22}^{(2)}(-z)\widetilde{Y}_{21}^{(1)}(z)
  +O\left(\max_{j=1,
  2}\frac{\left|\widetilde{Y}_{12}^{(j)}(z)\right|+\left|\widetilde{Y}_{22}^{(j)}(z)\right|}{n}\right).
\]
Since the final term has a logarithmic singularity at $z=i$, we will
handle it away from this point. In the range $z = e^{i\theta},
\pi/2+e^{-\sqrt n}<\theta<\pi/2+\delta_1$, we
 have (by \eqref{insideUz1} and estimates on $\psi$)
$$\max_{j=1, 2} \frac{|\widetilde Y_{12}^{(j)}(z)|+|\widetilde Y_{12}^{(j)}(z)|}{n} = O\left(\frac{|\ln\zeta|}{n}\right) =O(\sqrt{n}/n)=O(n^{-1/2}).$$
 So,
it suffices to show
\begin{equation}
\label{reduced_p3_OOI}
 z^n\widetilde{Y}_{22}^{(1)}(z)\widetilde{Y}_{21}^{(2)}(-z) + (-z)^n\widetilde{Y}_{22}^{(2)}(-z)\widetilde{Y}_{21}^{(1)}(z)  =
 O(1).
\end{equation}
By \eqref{insideUz1} and \eqref{insideUz2}, we have
$$z^n\widetilde{Y}_{22}^{(1)}(z)\widetilde{Y}_{21}^{(2)}(-z) + (-z)^n\widetilde{Y}_{22}^{(2)}(-z)\widetilde{Y}_{21}^{(1)}(z) =
\frac{1}{e^\epsilon \mathcal{D}(z)\zeta^{i\epsilon/\pi}} \left(\frac{z}{i}\right)^n c_2^{(1)}\psi_3^{(1)} \left(\frac{-\zeta^{i\epsilon/\pi}}{\mathcal{D}(-z)}\left(\psi_1^{(2)}- \left(\frac{-z}{-i}\right)^nc_2^{(2)}\psi_3^{(2)}\right)\right)$$
$$+\frac{e^\epsilon \zeta^{i\epsilon/\pi}}{\mathcal{D}(-z)} \left(\frac{-z}{-i}\right)^n c_2^{(2)}
\psi_3^{(2)}\left(\frac{-1}{\mathcal{D}(z)\zeta^{i\epsilon/\pi}}\left(\psi_1^{(1)}
- \left(\frac{z}{i}\right)^nc_2^{(1)}\psi_3^{(1)}\right)\right). $$
Notice it is not ambiguous to leave the arguments of the $\psi$
functions unidentified, as $\displaystyle \zeta = n\ln
\frac{z}{z_j}$ and $\displaystyle \frac{z}{i} = \frac{-z}{-i}$.
Taking absolute values, we have
$$\left|z^n\widetilde{Y}_{22}^{(1)}(z)\widetilde{Y}_{21}^{(2)}(-z) + (-z)^n\widetilde{Y}_{22}^{(2)}(-z)\widetilde{Y}_{21}^{(1)}(z)\right| =$$
$$ \left|\mathcal{D}(z)\mathcal{D}(-z)\right|^{-1}\left|e^{-\epsilon}\left(c_2^{(1)}\psi_3^{(1)}\psi_1^{(2)}-\left(\frac{z}{i}\right)^n
c_2^{(1)}c_2^{(2)}
\psi_3^{(2)}\psi_3^{(1)}\right)+e^\epsilon\left(c_2^{(2)}\psi_1^{(1)}\psi_3^{(2)}-\left(\frac{z}{i}\right)^nc_2^{(1)}
c_2^{(2)}\psi_3^{(2)}\psi_3^{(1)}\right)\right|$$

$$\lesssim \bigg|e^{-\epsilon} \frac{\Gamma(1+\frac{i\epsilon}{\pi})}{\Gamma(-\frac{i\epsilon}{\pi})}
\psi\left(1+\frac{i\epsilon}{\pi}, 1, e^{-i\pi} \zeta\right)\psi\left(\frac{i\epsilon}{\pi}, 1,
\zeta\right)+e^{\epsilon} \frac{\Gamma(1-\frac{i\epsilon}{\pi})}{\Gamma(\frac{i\epsilon}{\pi})}
 \psi\left(1-\frac{i\epsilon}{\pi}, 1, e^{-i\pi} \zeta\right)\psi\left(-\frac{i\epsilon}{\pi}, 1, \zeta\right)$$
$$-(e^\epsilon+e^{-\epsilon}) \frac{\Gamma(1+\frac{i\epsilon}{\pi})}{\Gamma(-\frac{i\epsilon}{\pi})}
\frac{\Gamma(1-\frac{i\epsilon}{\pi})}{\Gamma(\frac{i\epsilon}{\pi})}\left(\frac{z}{i}\right)^n\psi
\left(1+\frac{i\epsilon}{\pi}, 1, e^{-i\pi}
\zeta\right)\psi\left(1-\frac{i\epsilon}{\pi}, 1,
e^{-i\pi}\zeta\right)\bigg|.$$ By
\eqref{largezeta},\eqref{ss1},\eqref{ss2}, these expressions are
uniformly bounded in $\zeta,
|\zeta|>1,\epsilon<\epsilon_0,n>n_0(\epsilon)$.  Thus, we only need
to consider the case $\zeta: |\zeta|<1$.   On that interval,
$(z/i)^n=e^\zeta=1+O(\zeta)$.   We are concerned with the
logarithmic singularities and the constant terms in the series
expansions for $\psi$.    We isolate these terms and denote their
sum $c_0$.  Using the notation $\displaystyle d(z) =
\frac{\Gamma'(z)}{\Gamma(z)}$ for the digamma function, we get
$$c_0=e^{-\epsilon} \frac{1}{\Gamma(-\frac{i\epsilon}{\pi})\Gamma(\frac{i\epsilon}{\pi})}
\left(\ln(e^{-i\pi}\zeta) + d\left(1+\frac{i\epsilon}{\pi}\right) - 2d(1)\right)\left(\ln \zeta + d\left(\frac{i\epsilon}{\pi}\right)-2d(1)\right)$$
$$-(e^{\epsilon}+e^{-\epsilon})  \frac{1}{\Gamma(\frac{i\epsilon}{\pi})
\Gamma(-\frac{i\epsilon}{\pi})}\left(\ln(e^{-i\pi}\zeta) + d\left(1+\frac{i\epsilon}{\pi}\right)
 - 2d(1)\right)\left(\ln(e^{-i\pi}\zeta) + d\left(1-\frac{i\epsilon}{\pi}\right)-2d(1)\right)
$$
$$+ e^\epsilon\frac{1}{\Gamma(\frac{i\epsilon}{\pi})\Gamma(-\frac{i\epsilon}{\pi})}\left(\ln(e^{-i\pi}\zeta) +
 d\left(1-\frac{i\epsilon}{\pi}\right) - 2d(1)\right)\left(\ln \zeta + d\left(\frac{-i\epsilon}{\pi}\right)-2d(1)\right).$$
Performing these multiplications, writing $\ln (iu)=\ln u+i\pi/2,
\,\ln (e^{-i\pi}iu)=\ln u-i\pi/2$, and pulling out the common factor
yields
$$c_0=\frac{\ln u}{\Gamma(\frac{i\epsilon}{\pi})\Gamma(-\frac{i\epsilon}{\pi})}\left(e^{-\epsilon}
\left(d\left(\frac{i\epsilon}{\pi}\right) -
d\left(1-\frac{i\epsilon}{\pi}\right)+i\pi\right)+e^\epsilon\left(d\left(\frac{-i\epsilon}{\pi}\right)+
d\left(1+\frac{i\epsilon}{\pi}\right)+i\pi\right)\right)+O(1),$$
where $\zeta=iu$. Using the reflection formula \eqref{reflection}
gives
$$\frac{\ln u}{\Gamma(\frac{i\epsilon}{\pi})\Gamma(-\frac{i\epsilon}{\pi})}(e^\epsilon(-\pi\cot(-i\epsilon))+e^\epsilon i \pi +
 e^{-\epsilon}(-\pi\cot(i\epsilon))+e^{-\epsilon}i\pi)$$
$$=\frac{\pi\ln u}{\Gamma(\frac{i\epsilon}{\pi})\Gamma(-\frac{i\epsilon}{\pi})}(i(e^\epsilon+e^{-\epsilon})
- e^\epsilon\cot(-i\epsilon)-e^{-\epsilon}\cot(i\epsilon))=0,$$
because $\displaystyle \cot z =
\frac{i(e^{iz}+e^{-iz})}{e^{iz}-e^{-iz}}$. Therefore,
$$z^n\widetilde{Y}_{22}^{(1)}(z)\widetilde{Y}_{21}^{(2)}(-z) + (-z)^n\widetilde{Y}_{22}^{(2)}(-z)\widetilde{Y}_{21}^{(1)}(z)  = O(1)$$
and
$$\frac{\Psi_{n}^*(z)}{\Phi_n^*(z)} + \frac{\Psi_n^*(-z)}{\Phi_n^*(-z)} = O(1)$$
uniformly in $z \in \mathbb{T}$ for $n$ large enough. This finishes
the proof of Lemma \ref{l1}.\end{proof}

{\Large \part*{Acknowledgement}} The work of SD done in the first
part of the paper was supported by RSF-14-21-00025 and his research
on the rest of the paper was supported by the grant NSF-DMS-1464479.
The research of KR was supported by the RTG grant NSF-DMS-1147523.
\bigskip

\end{document}